\newcommand\RE{\mathbb{R}}
\renewcommand\div{\operatorname{\mathrm{div}}}
\newcommand\divs{\div_{\bfs}}
\newcommand\Grad{\operatorname{\boldsymbol\nabla}}
\newcommand\Grads{\operatorname{\boldsymbol\nabla}_{\bfs}}
\newcommand\symgrad{\operatorname{\underline{\boldsymbol\varepsilon}}}
\newcommand\Huo{\bfH^1_0(\Omega)}
\newcommand\Hub{\bfH^1(\B)}
\newcommand\Ldo{L^2_0(\Omega)}
\newcommand\Ldb{\bfL^2(\B)}
\newcommand\Of{\Omega^f}
\newcommand\Oft{\Omega^f_t}
\newcommand\Os{\Omega^s}
\newcommand\Ost{\Omega^s_t}
\newcommand\B{\mathcal{B}}
\newcommand\bfH{\mathbf{H}}
\newcommand\bfL{\mathbf{L}}
\newcommand\bfU{\mathbf{U}}
\newcommand\bfV{\mathbf{V}}
\newcommand\bfS{\mathbf{S}}
\newcommand\bfW{\mathbf{W}}
\newcommand\bfX{\mathbf{X}}
\newcommand\bfx{\mathbf{x}}
\newcommand\bfy{\mathbf{y}}
\newcommand\bfs{\mathbf{s}}
\newcommand\bfu{\mathbf{u}}
\newcommand\bfv{\mathbf{v}}
\newcommand\bfw{\mathbf{w}}
\newcommand\bfz{\mathbf{Y}}
\newcommand\bfn{\mathbf{n}}
\newcommand\bfd{\mathbf{d}}
\newcommand\bff{\mathbf{f}}
\newcommand\bfg{\mathbf{g}}
\newcommand\bfLambda{\boldsymbol{\Lambda}}
\newcommand\bflambda{\boldsymbol{\lambda}}
\newcommand\bfmu{\boldsymbol{\mu}}
\newcommand\bfsigma{\boldsymbol{\sigma}}
\newcommand\OT{0,T}
\newcommand\OO{{0,\Omega}}
\newcommand\OB{{0,\B}}
\newcommand\Xb{\overline\bfX}
\newcommand\bbF{\mathbb{F}}
\newcommand\bbP{\mathbb{P}}
\newcommand\bbA{\mathbb{A}}
\newcommand\bbB{\mathbb{B}}
\newcommand\bbK{\mathbb{K}}
\newcommand\bbV{\mathbb{V}}
\newcommand\ucX{\bfu(\bfX(\cdot,t),t)}
\newcommand\vcX{\bfv(\bfX(\cdot,t))}
\newcommand\ucXb{\bfu(\Xb(\cdot,t),t)}
\newcommand\vcXb{\bfv(\Xb(\cdot,t))}
\newcommand\dr{\delta_\rho}
\newcommand\Ho{\bfH_0}
\newcommand\Vo{\mathbf{V}_0}
\newcommand\dt{\Delta t}
\newcommand\m[1]{\mathsf{#1}}
\newcommand\calA{\mathscr{A}}
\newcommand\calB{\mathscr{B}}
\newcommand\T{\mathcal{T}}
\newcommand\hx{h_\bfx}
\newcommand\hs{h_\bfs}
\newcommand\hl{h_{\bflambda}}
\newcommand\Xoh{\mathring\bfX_h}
\newcommand\CNM{\textsf{CNm}\xspace}
\newcommand\CNT{\textsf{CNt}\xspace}
\newcommand\BDF{\textsf{BDF2}\xspace}
\newcommand\BD{\textsf{BDF1}\xspace}
\theoremstyle{plain}
\newtheorem{thm}{Theorem}
\newtheorem{proposition}[thm]{Proposition}
\newtheorem{problem}{Problem}
\newtheorem{ass}{Assumption}
\theoremstyle{remark}
\newtheorem{remark}{Remark}
\begin{document}
\title[]
{Existence, uniqueness, and approximation of a fictitious domain formulation
for fluid-structure interactions}
\author{Daniele Boffi}
\address{King Abdullah University of Science and Technology (KAUST), Saudi
Arabia and Universit\`a degli Studi di Pavia, Italy}
\email{daniele.boffi@kaust.edu.sa}
\urladdr{https://cemse.kaust.edu.sa/people/person/daniele-boffi}
\author{Lucia Gastaldi}
\address{DICATAM, Universit\`a degli Studi di Brescia, Italy}
\email{lucia.gastaldi@unibs.it}
\urladdr{http://lucia-gastaldi.unibs.it}
\subjclass{}

\begin{abstract}
In this paper we describe a computational model for the simulation of
fluid-structure interaction problems based on a fictitious domain approach. We
summarize the results presented over the last years when our research evolved
from the Finite Element Immersed Boundary Method (FE-IBM) to the actual Finite
Element Distributed Lagrange Multiplier method (FE-DLM). We recall the
well-posedness of our formulation at the continuous level in a simplified
setting. We describe various time semi-discretizations that provide
unconditionally stable schemes.  Finally we report the stability analysis for
the finite element space discretization where some improvements and
generalizations of the previous results are obtained.
\end{abstract}
\maketitle
\section{Introduction}
\label{se:intro}

In this paper we summarize in a unified setting some results of our research on
the modeling and the approximation of fluid-structure interaction problems. Our
aim is to describe the dynamics of a solid elastic body immersed in a Newtonian
incompressible fluid. Here, we consider the so called zero-codimension case,
that is the solid and the fluid are both two- or three-dimensional.
From the mathematical point of view, the interaction is described by different
partial differential equations in the regions occupied by the fluid and the
solid, coupled with suitable transmission conditions along the interface between
the two.
It is well known that the numerical approximation of fluid-structure
interaction problems is challenging for several reasons: first of all the
numerical method must track the movement of the structure and the corresponding
computational grids should allow the evaluation of quantities defined on moving
domains. In this context, the use of a Lagrangian framework is more suited for
the simulation of the structure deformation, while the approximation of the
fluid velocity and pressure is better performed by an Eulerian approach.

Another crucial issue related to the approximation of fluid-structure
interactions is how to deal with the coupling of the two underlying models:
monolithic approaches perform the simultaneous computation of the fluid and
structure unknowns, while partitioned schemes combine different solvers in the
two subregions with an iterative procedure. In general monolithic schemes
require implicit nonlinear solvers and a careful trade off between superior
stability properties and more demanding computational load.

The research in this framework is very active and is based on a wide literature,
ranging from boundary fitted approaches which, typically, use the so called
Arbitrary Lagrangian Eulerian method~\cite{hirt,Donea1977,HLZ,doneahuerta2004}
to non fitted approaches which include, for instance, level set
methods~\cite{levelset} Nitsche and XFEM methods~\cite{nitsche,XFEM}.
Our model belongs to the latter family originating from the Immersed
Boundary Method (IBM)~\cite{peskin,BGHP} and evolved towards a fictitious
domain approach in the spirit
of~\cite{glopanper1,glopanper2,girglo1995,glopanhj,girglopan,yu}.
Obviously, no method is the optimal choice for all cases and, depending on the
particular situation, it could be preferable to make use of different
approaches; our formulation has the advantage to be unconditionally stable in
time~\cite{BCG,wolf} without the need of using fully implicit time schemes
and, being based on non fitted meshes, can accommodate larger displacements.
On the other hand, the coupling between fluid and structure models requires
the evaluation of integrals that combine basis functions defined on different
meshes.
A solid mathematical analysis has been performed; we shall review some of the
results in the following sections giving reference to the original papers when
appropriate. Moreover, we extend the discretization of our model, allowing for
more general choices of finite element spaces.
We describe an incompressible solid immersed in an incompressible fluid; more
general situation could be considered, involving compressible
solids~\cite{compressible}.

In Section~\ref{se:model} we recall the problem we are interested in, and
introduce our fictitious domain formulation. Next, we analyze the continuous
problem in Section~\ref{se:existence} in a linearized setting, assuming that
the motion of the solid is prescribed. Section~\ref{se:dt} deals with the time
discretization; the main result of this section is the unconditional stability
of the evolution scheme. The space discretization is considered in
Section~\ref{se:saddle} where a stability analysis is presented which leads to
optimal convergence estimates for the steady state solution.
Finally, Section~\ref{se:numerical} reports on several numerical tests that
confirm the good behavior of our approach.

\section{Model problem and fictitious domain formulation}
\label{se:model}

The problem we want to address is easily explained in the following
simplified setting. We consider a solid immersed in a fluid in two or three
dimensions. At time $t$ the solid is located in the region $\Ost\subset\RE^d$
($d=2,3$) which is the image of a reference configuration $\B$ through a
mapping $\bfX:\B\to\RE^d$. The fluid occupies the region $\Oft\subset\RE^d$ so
that we are interested in a dynamic occurring in the union of $\Ost$ and
$\Oft$. A typical assumption is that, denoting by $\Omega$ the interior of the
union of the closures of $\Ost$ and $\Oft$, then $\Omega$ does not depend on
$t$. This assumption is reasonable for several applications; in general
$\Omega$ can be thought as a container where the dynamics takes place: for
instance, the solid can be inside the fluid and far away from the exterior
boundary of it, or the solid can touch one fixed part of the container. In
this paper we deal with the first situation.
We denote by $\Gamma_t$ the interface between fluid and solid, which can be
defined as the interior of the intersection of $\overline{\Ost}$ and
$\overline{\Oft}$.

The system is described by the fluid velocity $\bfu^f$ and pressure $p^f$, and
by the solid position $\bfX$. The velocity and the pressure depend on time and
on the space Eulerian variable $\bfx\in\Oft$, while the position $\bfX$
depends on time and on the Lagrangian variable $\bfs\in\B$.
In the fixed domain $\Omega$ we are using the Eulerian framework and the
corresponding variable $\bfx$. A point $\bfx$ of the domain $\Ost$ can be
expressed at time $t$ in the Lagrangian setting as
\[
\bfx=\bfX(\bfs,t).
\]
The kinematic condition is expressed by the following relationship between the
material velocity $\bfu^s$ and $\bfX$:
\[
\bfu^s(\bfx,t)=\frac{\partial\bfX}{\partial t}(\bfs,t),
\]
where $\bfx=\bfX(\bfs,t)$. The deformation gradient is given by
\[
\bbF(\bfs,t)=\frac{\partial\bfX}{\partial\bfs}(\bfs,t).
\]
We denote by $|\bbF|$ its determinant. We consider an incompressible solid, so
that $|\bbF|$ is constant in time; in particular, in the case when $\B$ is the
initial configuration $\Os_0$ of $\Ost$, we have $|\bbF|=1$.

In the incompressible fluid the Navier--Stokes equations describe the dynamics
as follows
\begin{equation}
\aligned
&\rho_f\left(\frac{\partial\bfu^f}{\partial t}+\bfu^f\cdot\Grad\bfu^f\right)=
\div\bfsigma^f&&\text{in }\Oft\\
&\div\bfu^f=0&&\text{in }\Oft,
\endaligned
\label{eq:ns}
\end{equation}
where $\rho_f$ is the fluid density and $\bfsigma^f$ is the Cauchy stress
tensor that reads
\[
\bfsigma^f=-p^f{\mathbb I}+\nu_f\symgrad(\bfu^f),
\]
$\nu_f>0$ being the viscosity of the fluid and $\symgrad$ the symmetric
gradient.

We assume an incompressible viscoelastic material that can be described by a
Cauchy stress tensor composed of two parts
$\bfsigma^s=\bfsigma^s_f+\bfsigma^s_s$: the first one is analogous to the
fluid stress with the introduction of an artificial pressure $p^s$, which is
the Lagrange multiplier associated with the incompressibility,
\[
\bfsigma^s_f=-p^s{\mathbb I}+\nu_s\symgrad(\bfu^s),
\]
$\nu_s>0$ being the body viscosity; the second term is related to the
Piola--Kirchhoff elasticity stress tensor $\bbP$ via the Piola transformation
\[
\bfsigma^s_s=|\bbF|^{-1}\bbP\bbF^\top.
\]
The elastic part of the stress can be modeled using a potential energy density
$W(\bbF,\bfs,t)$ so that
\[
\bbP(\bbF,\bfs,t)=\frac{\partial W}{\partial\bbF}(\bbF,\bfs,t).
\]
Taking all this into account, the equations describing the solid are
\begin{equation}
\aligned
&\rho_s\frac{\partial^2\bfX}{\partial t^2}
=\divs(|\bbF|\bfsigma_f^s\bbF^{-\top}+\bbP(\bbF))\ &&\text{in }\B\\
&\div\bfu^s=0&&\text{in }\Ost,
\endaligned
\label{eq:ela}
\end{equation}
where $\rho_s$ is the solid density. The description of the model requires
suitable transmission conditions enforcing the appropriate continuities of the
velocity and of the Cauchy stress across the interface
$\Gamma_t$ which can be stated as follows
\begin{equation}
\aligned
&\bfu^f=\bfu^s&&\text{on }\partial\Ost\\
&\bfsigma^f\bfn_f=-(\bfsigma^s_f+|\bbF|^{-1}\bbP\bbF^\top)\bfn_s
&&\text{on }\partial\Ost,
\endaligned
\label{eq:trans}
\end{equation}
where $\bfn_f$ and $\bfn_s$ stand for the outward unit normal vectors to
$\Oft$ and $\Ost$, respectively. In conclusion, the system is described
by~\eqref{eq:ns}, \eqref{eq:ela}, \eqref{eq:trans}, and the following initial
and boundary conditions
\begin{equation}
\aligned
&\bfu^f(0)=\bfu^f_0&&\text{in }\Of_0\\
&\bfu^s(0)=\bfu^s_0&&\text{in }\Os_0\\
&\bfX(0)=\bfX_0&&\text{in }\B\\
&\bfu^f=0&&\text{on }\partial\Omega.
\endaligned
\label{eq:IBC}
\end{equation}

Before describing our variational formulation we recall some standard notation
that we are going to adopt~\cite{LM}. Given a domain $D$, the space
$\mathscr{D}(D)$ is the space of infinitely differentiable functions with
compact support in $D$, $L^2(D)$ is the space of square integrable functions
on $D$, the standard Sobolev spaces are denoted by $W^{s,p}(D)$, where
$s\in\RE$ refers to the differentiability and $p\in[1,+\infty]$ to the
integrability exponent. As usual, when $p=2$ we use the notation $H^s(D)$. The
corresponding norm is indicated by $\|\cdot\|_{s,D}$ and the scalar product in
$L^2(D)$ by $(\cdot,\cdot)_D$; when no confusion arises we omit the indication
of the domain $D$. In particular we will usually omit $\Omega$, while we will
indicate explicitly when quantities are defined on the domain $\B$.
$L^2_0(D)$ stands for the subspace of zero mean valued functions and
$H^1_0(D)$ is the subset of functions in $H^1(D)$ with zero trace on
$\partial D$.
Given Banach spaces $X$ and $Y$, the notation $Y(\OT;X)$ contains space-time
functions that for almost all $t\in\OT$ are in $X$ and that are in $Y$ as
functions from $(\OT)$ to $X$.
Functional spaces of vector valued functions are indicated with boldface
letters.

The main idea behind the fictitious domain approach that we are going to
adopt, consists in extending the fluid variables inside the solid domain so
that all involved quantities are defined in $\Omega$ (Eulerian variables) or
$\B$ (Lagrangian variables).
We started considering a fictitious domain model for a simplified interface
problem~\cite{risulti,ruggeri} which has been extended to fluid-structure
interactions in~\cite{BCG}.

We denote by $\bfu$ and $p$ the velocity and
pressure in $\Omega$, with the understanding that their restrictions to the
two subdomains $\Oft$ and $\Ost$ coincide with $\bfu^f$, $p^f$ and $\bfu^s$,
$p^s$, respectively. With the aim of presenting a variational formulation of
our problem, the condition $\bfu|_{\Ost}=\bfu^s$ will be enforced with the
help of a bilinear form.
Let $\bfLambda$ be a Hilbert space and $c:\bfLambda\times\Hub\to\RE$ a
continuous bilinear form with the property
\[
c(\bfmu,\bfz)=0\quad\forall\mu\in\bfLambda\quad\text{implies}\quad
\bfz=0.
\]

The variational formulation is described by making use of the following
notation.
\[
\aligned
&\nu=
\begin{cases}
\nu_f&\text{in }\Oft\\
\nu_s&\text{in }\Ost
\end{cases}\\
&a(\bfu,\bfv)=\int_\Omega\nu\symgrad(\bfu):\symgrad(\bfv)\,d\bfx\\
&b(\bfu,\bfv,\bfw)=
\int_\Omega\frac{\rho_f}2\left((\bfu\cdot\Grad\bfv)\cdot\bfw
-(\bfu\cdot\Grad\bfw)\cdot\bfv\right)\,d\bfx\\
&\dr=\rho_s-\rho_f.
\endaligned
\]

\begin{problem}[Fictitious domain formulation]
Given $\bfu_0\in\Huo$, $\bfX_0\in\bfW^{1,\infty}(\B)$, and $\bfX_1\in\Hub$,
find $\bfu(t)\in\Huo$, $p(t)\in\Ldo$, $\bfX(t)\in\Hub$, and
$\bflambda(t)\in\bfLambda$ such that, for almost every $t\in(\OT)$, it holds
\begin{equation}
\aligned
&\rho_f\left(\frac{\partial\bfu}{\partial t}(t),\bfv\right)
   +b(\bfu(t),\bfu(t),\bfv)+a(\bfu(t),\bfv)\\
&\qquad-(\div\bfv,p(t))+c\left(\bflambda(t),\vcX\right)=0
   &&\forall\bfv\in\Huo\\
&(\div\bfu(t),q)=0&&\forall q\in\Ldo\\
&\dr\left(\frac{\partial^2\bfX}{\partial t^2}(t),\bfz\right)_{\B}
   +\left(\bbP(\bbF(t)),\Grads\bfz\right)_{\B}
   -c\left(\bflambda(t),\bfz\right)=0
&&\forall\bfz\in\Hub\\
&c\left(\bfmu,\ucX-\frac{\partial\bfX}{\partial t}(t)\right)=0 
   &&\forall\bfmu\in\bfLambda\\
&\aligned
&\bfu(0)=\bfu_0&&\text{in }\Omega\\
&\bfX(0)=\bfX_0&&\text{in }\B\\
&\frac{\partial\bfX}{\partial t}(0)=\bfX_1&&\text{in }\B.
\endaligned
\endaligned
\label{eq:DLM}
\end{equation}

\label{pb:DLM}
\end{problem}

\begin{remark}

The initial condition $\bfX_1$ in Problem~\ref{pb:DLM} is related to
$\bfu_0^s$ of~\ref{eq:IBC} by the relation
\[
\bfX_1=\bfu_0^s(\bfX_0)\qquad\text{in $\B$}.
\]

\end{remark}

Various choices have been presented for the bilinear form $c$ responsible for
the coupling of the Lagrangian and Eulerian frames.

In our setting two possible definitions of $c$ have been discussed
in~\cite{BCG,BG}: a natural choice is to consider as $\bfLambda$ the dual
space of $\Hub$ so that $c$ can
be taken as the duality pairing that certainly satisfies the required
properties; a second equivalent choice stems from interpreting the duality
pairing as the scalar product in $\Hub$ by the Riesz representation theorem so
that $\bfLambda=\Hub$. More in detail, we have the following definitions

\begin{enumerate}[1.]

\item
$\bfLambda_1=\Hub'$ and $c_1:\bfLambda_1\times\Hub\to\RE$ with
\begin{equation}
c_1(\bfmu,\bfz)={}_{\bfLambda_1}\langle\bfmu,\bfz\rangle_{\Hub}
\label{eq:choice1}
\end{equation}

\item
$\bfLambda_2=\Hub$ and $c_2:\bfLambda_2\times\Hub\to\RE$ with
\begin{equation}
c_2(\bfmu,\bfz)=(\bfmu,\bfz)_\B+(\Grads\bfmu,\Grads\bfz)_\B.
\label{eq:choice2}
\end{equation}

\end{enumerate}

While the two definitions are equivalent for the continuous problem, they give
rise to different discretizations. In the sequel we are going to use the
generic notation $\bfLambda$ and $c$, while indicating explicitly one of the
two cases when needed.

An analogous formulation, which is outside the topics of the present work, can
also be used in the case of codimension one structures.
We refer the interested reader to~\cite{BCG,BG}.

We end this section by stating a stability result for the continuous problem
which was proved in~\cite{BCG}.

\begin{proposition}
Let $\bfu(t)\in\Huo$ and $\bfX(t)\in\Hub$ be solutions of
Problem~\ref{pb:DLM}. Assume that $\partial\bfX(t)/\partial t\in\bfL^2(\B)$
and consider the elastic potential energy of the body given by
\[
E\left(\bfX(t)\right)=\int_\B W(\bbF(\bfs,t))\,d\bfs.
\]
Then the following conservation property is satisfied for almost every
$t\in(\OT)$
\[
\frac{\rho_f}2\frac{d}{dt}||\bfu(t)||^2_{\OO}
+||\nu^{1/2}\symgrad(\bfu(t))||^2_{\OO}+
\frac{\dr}2\frac{d}{dt}\left\|\frac{\partial\bfX}{\partial t}(t)\right\|^2_{\OB}
+\frac{d}{dt}E(\bfX(t))=0.
\]
\end{proposition}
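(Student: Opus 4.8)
The plan is to test the four equations of Problem~\ref{pb:DLM} with the natural choices of test functions and add the resulting identities, exploiting the skew-symmetry of $b$ and the structure of the coupling term $c$ to cancel the indefinite contributions. Concretely, I would take $\bfv=\bfu(t)\in\Huo$ in the first equation, $q=p(t)\in\Ldo$ in the second, $\bfz=\partial\bfX/\partial t(t)\in\Hub$ in the third, and $\bfmu=\bflambda(t)\in\bfLambda$ in the fourth. The pressure term $(\div\bfv,p(t))$ from the first equation is then killed exactly by the second equation since $(\div\bfu(t),p(t))=0$. The convection term vanishes on its own: by definition $b(\bfu,\bfu,\bfu)=0$ because of the antisymmetric way it was written. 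The viscous term gives $a(\bfu(t),\bfu(t))=\|\nu^{1/2}\symgrad(\bfu(t))\|_{\OO}^2$, and the inertial term gives $\rho_f(\partial\bfu/\partial t,\bfu)=\tfrac{\rho_f}{2}\tfrac{d}{dt}\|\bfu(t)\|_{\OO}^2$.

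Next I would handle the coupling and the solid terms. Adding the first and fourth equations, the two occurrences of $c$ combine as $c(\bflambda(t),\bfu(\bfX(\cdot,t),t))$ and $-c(\bflambda(t),\partial\bfX/\partial t(t))$; but the fourth equation with $\bfmu=\bflambda(t)$ says precisely that $c(\bflambda(t),\bfu(\bfX(\cdot,t),t)-\partial\bfX/\partial t(t))=0$, so this pair cancels. There remains from the third equation the term $-c(\bflambda(t),\partial\bfX/\partial t(t))$ tested differently — one must be careful about which copy of $c$ is being used — but after correctly pairing the two copies of $c$ that appear with opposite signs (one in the fluid equation evaluated along $\bfX$, one in the solid equation), both are accounted for by the kinematic constraint. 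The solid inertia term yields $\dr(\partial^2\bfX/\partial t^2,\partial\bfX/\partial t)_\B=\tfrac{\dr}{2}\tfrac{d}{dt}\|\partial\bfX/\partial t(t)\|_{\OB}^2$, which is legitimate under the stated hypothesis $\partial\bfX/\partial t\in\bfL^2(\B)$.

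The one genuinely non-routine step is showing that the elastic term produces an exact time derivative of the energy, i.e. $(\bbP(\bbF(t)),\Grads(\partial\bfX/\partial t(t)))_\B=\tfrac{d}{dt}E(\bfX(t))$. This follows from the chain rule together with the constitutive relation $\bbP=\partial W/\partial\bbF$: since $\partial/\partial t\,\bbF(\bfs,t)=\Grads(\partial\bfX/\partial t(\bfs,t))$, we have $\tfrac{d}{dt}W(\bbF(\bfs,t))=\tfrac{\partial W}{\partial\bbF}(\bbF(\bfs,t)):\tfrac{\partial}{\partial t}\bbF(\bfs,t)=\bbP(\bbF(t)):\Grads(\partial\bfX/\partial t(t))$, and integrating over $\B$ and interchanging $\tfrac{d}{dt}$ with the integral gives the claim. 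Collecting all surviving terms gives exactly the asserted identity. The main obstacle, and the only place requiring care, is the rigorous justification of these manipulations at the stated (low) regularity — in particular differentiating $\|\bfu(t)\|^2$ and $\|\partial\bfX/\partial t(t)\|^2$ in time and commuting $\tfrac{d}{dt}$ with $\int_\B W(\bbF)\,\ds$ — which is why the statement is phrased as a formal conservation property holding for a.e.\ $t$, under the additional assumption that $\partial\bfX/\partial t\in\bfL^2(\B)$; I would simply note that these steps are classical for sufficiently smooth solutions and hold by density/approximation arguments otherwise, referring to~\cite{BCG} for the details.
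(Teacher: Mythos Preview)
Your approach is correct and is precisely the standard energy argument one expects; note, however, that the paper does not actually give a proof of this proposition in the text --- it simply states the result and refers to~\cite{BCG} --- so there is no in-paper proof to compare against in detail. Your choice of test functions ($\bfv=\bfu$, $q=p$, $\bfz=\partial\bfX/\partial t$, $\bfmu=\bflambda$) and the identification of $(\bbP(\bbF),\Grads(\partial\bfX/\partial t))_\B$ with $\tfrac{d}{dt}E(\bfX)$ via $\bbP=\partial W/\partial\bbF$ are exactly what the cited reference does.

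One small clarification on the bookkeeping, since your middle paragraph wobbles: the two copies of $c$ that must cancel are the one in the \emph{first} equation (giving $+c(\bflambda,\bfu(\bfX))$ after setting $\bfv=\bfu$) and the one in the \emph{third} equation (giving $-c(\bflambda,\partial\bfX/\partial t)$ after setting $\bfz=\partial\bfX/\partial t$); their sum is $c(\bflambda,\bfu(\bfX)-\partial\bfX/\partial t)$, which vanishes by the \emph{fourth} equation with $\bfmu=\bflambda$. You do arrive at this in the end, but the phrase ``adding the first and fourth equations'' is not the right description --- the fourth equation is not added, it is invoked as an identity to kill the combined $c$ contribution from equations one and three.
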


\section{Existence and uniqueness of the linearized problem}
\label{se:existence}

Not many results are available in the literature about existence and
uniqueness of the solution to fluid-structure interaction problems. This is
not surprising since the coupling between fluids and solids gives rise in
general to highly non linear problems. In the case when a fluid is containing
rigid solids or elastic bodies described by a finite number of modes,
existence and uniqueness of weak solutions have been studied for instance
in~\cite{CSMT,DE1999,DE2000,DEGLT,F,GM,GLS,HS,Serre,Ta,TaTu}; when a fluid is
enclosed in a solid membrane then the existence and uniqueness of weak
solutions have been discussed in~\cite{Beirao,CDEG,MC2013b,MC2016}. Moreover,
local-in-time existence and uniqueness of strong solutions for an elastic
structure immersed in a fluid are proved
in~\cite{Coutand2005,Coutand2006,RaymondVanni2014,Boulakia2017,Boulakia2019}.

In this section we describe the analysis performed in~\cite{existence} about
the existence and the uniqueness of a linearization of Problem~\ref{pb:DLM}
in the case when $\bfLambda=\Hub$ and the bilinear form $c$ is equal to the
scalar product in $\Hub$.
This is a first step towards the analysis of the full problem which could make
use of some fixed point strategy.

We consider a given function $\Xb$ that describes the motion of the solid. We
assume that $\Xb$ belongs to $C^1([\OT];\bfW^{1,\infty}(\B))$, is invertible
with Lipschitz inverse, and coincides with the identity at time $t=0$, that is
$\Xb(\bfs,0)=\bfs$. Moreover, we assume that the motion of the solid is
compatible with the incompressibility constraint, that is
$\det(\Grads\Xb(t))=1$ for all $t$.

We choose a linear model for the elasticity, namely $\bbP(\bbF)=\kappa\bbF$;
moreover, we introduce a new variable $\bfw(t)$ equal to the velocity of the
solid $\partial\bfX(t)/\partial t$, so that, after neglecting the convective
term in the Navier--Stokes equation, we are led to the following problem.

\begin{problem}[Linearized formulation]
Let us assume that $\Xb\in C^1([\OT];\bfW^{1,\infty}(\B))$ satisfies the
hypotheses described above.
Given $\bfu_0\in\Huo$, $\bfX_0\in\bfW^{1,\infty}(\B)$, and $\bfX_1\in\Hub$,
find $\bfu(t)\in\Huo$, $p(t)\in\Ldo$, $\bfX(t)\in\Hub$, $\bfw\in\Hub$, and
$\bflambda(t)\in\Hub$ such that, for almost every $t\in(\OT)$, it holds
\begin{equation}
\aligned
&\rho_f\left(\frac{\partial\bfu}{\partial t}(t),\bfv\right)
  +a(\bfu(t),\bfv)-(\div\bfv,p(t))\\
&\qquad\qquad\qquad\qquad\qquad+c(\bflambda(t),\vcXb)=0&&\forall\bfv\in\Huo\\
&(\div\bfu(t),q)=0&&\forall q\in\Ldo\\
&\dr\left(\frac{\partial\bfw}{\partial t}(t),\bfz\right)_{\B}
  +\kappa(\Grads\bfX(t),\Grads\bfz)_{\B}-c(\bflambda(t),\bfz)=0
  &&\forall\bfz\in\Hub\\
&\left(\frac{\partial\bfX}{\partial t}(t),\bfy\right)_{\B}=(\bfw(t),\bfy)_{\B} 
  &&\forall\bfy\in\Ldb\\
&c\left(\bfmu,\ucXb-\bfw(t)\right)=0&&\forall\bfmu\in\Hub\\
&\aligned
&\bfu(0)=\bfu_0&&\text{in }\Omega\\
&\bfX(0)=\bfX_0&&\text{in }\B\\
&\bfw(0)=\bfX_1&&\text{in }\B.
\endaligned
\endaligned
\label{eq:DLM-lin}
\end{equation}
\label{pb:DLM-lin}
\end{problem}

The following existence and uniqueness result was proved in~\cite{existence}.

\begin{thm}
Under the assumptions reported above, there exists a unique solution to
Problem~\ref{pb:DLM-lin} that satisfies the following regularity
\[
\aligned
&\bfu\in\bfL^\infty(\OT;\Ho)\cap\bfL^2(\OT;\Vo)\\
&p\in L^2(\OT;\Ldo)\\
&\bfX\in\bfL^\infty(\OT;\Hub)\\
&\bfw\in\bfL^\infty(\OT;\bfL(\B))\cap\bfL^2(\OT;\Hub)\\
&\bflambda\in\bfL^2(\OT;\Hub),
\endaligned
\]
where
\[
\aligned
&\mathscr{V}_0=\{\bfv\in\mathscr{D}(\Omega)^d: \div\bfv=0\}\\
&\Ho=\text{ the closure of }\mathscr{V}_0\text{ in }\Ldo\\
&\Vo=\text{ the closure of }\mathscr{V}_0\text{ in }\Huo.
\endaligned
\]
\end{thm}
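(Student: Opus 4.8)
The plan is to prove existence by a Faedo--Galerkin procedure and uniqueness by an energy identity, the key simplification being that, once $\Xb$ is regarded as given data and the elastic law $\bbP(\bbF)=\kappa\bbF$ is linear, every term of \eqref{eq:DLM-lin} is affine in the unknowns, so the limit passage will reduce essentially to weak convergence. For the discretization I would use two adapted bases. For the fluid velocity I take the eigenfunctions of the Stokes operator: they span $\Vo$ and are divergence free, so testing only against them removes $p$ from the discrete system, makes the incompressibility equation automatic, and keeps $\bfu_m$ divergence free. On $\B$ I take, for $\bfX_m$, $\bfw_m$ and $\bflambda_m$, the eigenfunctions of $-\Delta+I$ with natural boundary conditions; being simultaneously orthogonal in $\Ldb$ and in $\Hub$, the orthogonal projection $P_m$ onto the discrete solid space that is induced by the coupling form $c=(\cdot,\cdot)_\Hub$ coincides with the $\Ldb$-orthogonal projection, hence is a contraction in $\Ldb$. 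Writing \eqref{eq:DLM-lin} on these spaces, using the solid momentum equation to express $\bflambda_m$ and the constraint to express $\bfw_m=P_m(\bfu_m\circ\Xb)$, one is left with a linear ODE system for the coordinates of $(\bfu_m,\bfX_m)$ whose mass operator, evaluated on $\bfv\in\Vo^m$, equals $\rho_f\|\bfv\|_\OO^2+\dr\|P_m\vcXb\|_\OB^2$. Since $\det(\Grads\Xb(t))=1$ gives, by the change of variables $\bfx=\Xb(\bfs,t)$, the equality $\|\vcXb\|_\OB=\|\bfv\|_{0,\Ost}$, and $P_m$ is an $\Ldb$-contraction, this quantity is bounded below by $\rho_f\|\bfv\|_{0,\Oft}^2+\rho_s\|\bfv\|_{0,\Ost}^2\ge\min(\rho_f,\rho_s)\|\bfv\|_\OO^2>0$, even when $\rho_s<\rho_f$. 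Thus the mass operator is invertible; its entries depend continuously on $t$ because $\Xb\in C^1([\OT];\bfW^{1,\infty}(\B))$, so the Cauchy--Lipschitz theorem yields a unique local solution, global on $[\OT]$ thanks to the a priori bound below.

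\emph{A priori estimates.} Choosing $\bfv=\bfu_m(t)$ in the fluid momentum equation, $\bfz=\bfw_m(t)$ in the solid momentum equation and $\bfmu=\bflambda_m(t)$ in the constraint, and using the kinematic relation in its discrete form $\partial_t\bfX_m=\bfw_m$ together with $\bbP(\bbF)=\kappa\bbF$, the three coupling contributions combine, through the constraint, into $c(\bflambda_m,\bfu_m\circ\Xb-\bfw_m)=0$ and disappear, leaving the discrete energy identity
\[
\frac12\frac{d}{dt}\Bigl(\rho_f\|\bfu_m\|_\OO^2+\dr\|\bfw_m\|_\OB^2+\kappa\|\Grads\bfX_m\|_\OB^2\Bigr)+\|\nu^{1/2}\symgrad(\bfu_m)\|_\OO^2=0.
\]
The contraction-plus-unit-Jacobian argument again shows that the bracketed quantity controls $\min(\rho_f,\rho_s)\|\bfu_m\|_\OO^2+\kappa\|\Grads\bfX_m\|_\OB^2$, so, with Korn's inequality, it bounds $\bfu_m$ in $\bfL^\infty(\OT;\Ho)\cap\bfL^2(\OT;\Vo)$ and $\bfX_m$ in $\bfL^\infty(\OT;\Hub)$ uniformly in $m$; the constraint then bounds $\bfw_m$ in $\bfL^\infty(\OT;\Ldb)\cap\bfL^2(\OT;\Hub)$, using that $P_m$ is an $\Ldb$-contraction and that $\bfv\mapsto\vcXb$ is bounded from $\Huo$ to $\Hub$. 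Testing the fluid momentum equation with $\bfv\in\Vo^m$ bounds $\partial_t\bfu_m$ in $\bfL^2(\OT;\Vo')$; combining the two momentum equations and exploiting the mapping properties of $\bfv\mapsto\vcXb$ bounds $\bflambda_m$ in $\bfL^2(\OT;\Hub)$ and then $\partial_t\bfw_m$ in $\bfL^2(\OT;\Hub')$.

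\emph{Passage to the limit and uniqueness.} From these bounds I extract a subsequence converging weakly-$*$ in each space. Because the system is linear in the unknowns and $\bfv\mapsto\vcXb$ is weakly continuous, all terms pass to the limit, and the density of $\bigcup_m\Vo^m$ in $\Vo$ and of the discrete solid spaces in $\Hub$ removes the Galerkin restriction on the test functions, so the limit satisfies the two momentum equations tested against divergence-free $\bfv$, the kinematic relation and the constraint, with $\bfu$ divergence free. The pressure is recovered, for almost every $t$, from the momentum functional that vanishes on $\Vo$ via the classical de Rham / closed-range argument, the inf-sup (Ne\v{c}as) inequality giving $p\in L^2(\OT;\Ldo)$ with a norm bound; the regularity $\bfu\in\bfL^2(\OT;\Vo)$, $\partial_t\bfu\in\bfL^2(\OT;\Vo')$ (and the analogous facts for $\bfX$ and $\bfw$) provides the time-continuity needed to attain the initial data, which are matched in the limit under the natural compatibility $\bfu_0|_\B=\bfX_1$. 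For uniqueness, linearity allows me to apply the energy identity above to the difference of two solutions, which has vanishing data; at the continuous level the constraint holds exactly, so $\bfw=\ucXb$ and $\|\bfw\|_\OB=\|\bfu\|_{0,\Ost}$, whence the left-hand side is nonnegative and $\symgrad(\bfu)\equiv0$, so $\bfu\equiv0$ by Korn, then $\bfw\equiv0$, then $\Grads\bfX\equiv0$, which with $\partial_t\bfX=\bfw$ and the zero initial datum forces $\bfX\equiv0$, and finally $\bflambda\equiv0$ by non-degeneracy of $c$ and $p\equiv0$ by the inf-sup condition.

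The step I expect to be the genuine obstacle is the a priori analysis: securing the coercivity of the left-hand side of the energy identity — equivalently, the invertibility of the Galerkin mass operator — in the case $\rho_s<\rho_f$, which is precisely what forces one to combine the incompressibility $\det(\Grads\Xb)=1$ of the prescribed motion with a discrete solid basis for which the projection induced by $c$ does not inflate the $\Ldb$-norm; and, secondarily, producing a uniform bound for the multiplier $\bflambda_m$, the least controlled of the unknowns, for which one must play the two momentum equations against each other and rely on the Lipschitz regularity of both $\Xb$ and its inverse.
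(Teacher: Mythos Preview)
Your proposal follows essentially the same route as the paper: eliminate $p$ by working with divergence-free velocities and eliminate $\bflambda$ by working in the kernel of the constraint~\eqref{eq:kernel}, carry out a Faedo--Galerkin argument \`a la Temam driven by the energy identity you wrote, and then recover the pressure via the closed-range theorem. The one notable deviation is that the paper recovers $\bflambda$ a posteriori at the continuous level by the Lax--Milgram lemma (exploiting that $c$ is the $\Hub$ inner product) rather than bounding $\bflambda_m$ uniformly at the Galerkin level; this sidesteps the apparent circularity in your ordering of the bounds on $\partial_t\bfu_m$, $\partial_t\bfw_m$ and $\bflambda_m$, and is the cleaner way to close the argument.
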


The proof of this result is obtained by considering first a reduced problem
where the unknowns $p$ and $\bflambda$ are eliminated since the velocity is
sought in the kernel of the divergence operator $\Vo$ and the pair
$(\bfu(t),\bfw(t))$ is required to satisfy the constraint
\begin{equation}
c\left(\bfmu,\ucXb-\bfw(t)\right)=0\quad\forall\bfmu\in\Hub.
\label{eq:kernel}
\end{equation}
In this setting, the proof follows a suitable modification of the Galerkin
arguments used in~\cite{temam} for the analysis of Navier--Stokes equations.

Finally, the Lagrange multiplier and the pressure are recovered by using
Lax--Milgram lemma and the Banach closed range theorem.

\section{Time advancing schemes}
\label{se:dt}

We begin in this section the study of the numerical approximation of
Problem~\ref{pb:DLM}, starting from the time discretization.

Let us introduce a time discretization parameter $\dt$, and let us denote by
$t_n$, $n=0,\dots,N$ the corresponding nodes; the following system is obtained
by the application of the backward Euler scheme.

\begin{problem}[Backward Euler scheme]
Given $\bfu_0\in\Huo$, $\bfX_0\in\bfW^{1,\infty}(\B)$, and $\bfX_1\in\Hub$,
for all $n=1,\dots,N$ find $\bfu^n\in\Huo$, $p^n\in\Ldo$, $\bfX^n\in\Hub$,
and $\bflambda^n\in\bfLambda$ such that
\begin{equation}
\aligned
&\rho_f\left(\frac{\bfu^{n+1}-\bfu^n}{\dt},\bfv\right)
+b\left(\bfu^{n+1},\bfu^{n+1},\bfv\right)+a\left(\bfu^{n+1},\bfv\right)\\
&\hspace{2.05cm}-\left(\div\bfv,p^{n+1}\right)+
c\left(\bflambda^{n+1},\bfv(\bfX^{n+1})\right)=0&&\forall\bfv\in\Huo\\
&\left(\div\bfu^{n+1},q\right)=0&&\forall q\in\Ldo\\
&\dr\left(\frac{\bfX^{n+1}-2\bfX^n+\bfX^{n-1}}{\dt^2},\bfz\right)_\B
+\left(\bbP(\bbF^{n+1}),\Grads\bfz\right)_\B\\
&\hspace{5.5cm}-c\left(\bflambda^{n+1},\bfz\right)=0&&\forall\bfz\in\Hub\\
&c\left(\bfmu,\bfu^{n+1}(\bfX^{n+1})-\frac{\bfX^{n+1}-\bfX^n}{\dt}\right)=0
&&\forall\bfmu\in\bfLambda\\
&\aligned
&\bfu^0=\bfu_0&&\text{in }\Omega\\
&\bfX^0=\bfX_0&&\text{in }\B,
\endaligned
\endaligned
\label{eq:BE}
\end{equation}
where $\bfX^{-1}$ can be defined, for instance, from the following equation
\[
\frac{\bfX^0-\bfX^{-1}}{\dt}=\bfX_1\qquad\text{in }\B.
\]
\label{pb:BE}
\end{problem}

In~\cite{BCG} the following stability estimate was proved for the time
discretization presented in Problem~\ref{pb:BE}
\[
\aligned
&\frac{\rho_f}{2\dt}\left(\|\bfu^{n+1}\|_{\OO}^2-\|\bfu^n\|_{\OO}^2\right)
+\nu\|\symgrad\bfu^{n+1}\|_{\OO}^2+\frac{E(\bfX^{n+1})-E(\bfX^n)}{\dt}\\
&\qquad
+\frac{\dr}{2\dt}\left(\left\|\frac{\bfX^{n+1}-\bfX^n}{\dt}\right\|_{\OB}^2
-\left\|\frac{\bfX^n-\bfX^{n-1}}{\dt}\right\|_{\OB}^2\right)\le 0.
\endaligned
\]
Despite the nice stability property, it is clear that solving
Problem~\ref{pb:BE} requires expensive numerical strategies in order to deal
with the fully implicit non-linear scheme. For that reason, we considered
other semi-implicit schemes based on the use of the position of the structure
at time $n$ instead of $n+1$. A possible semi-implicit version
of~\eqref{eq:BE} reads
\begin{equation}
\aligned
&\rho_f\left(\frac{\bfu^n-\bfu^n}{\dt},\bfv\right)
+b\left(\bfu^{n},\bfu^{n+1},\bfv\right)+a\left(\bfu^{n+1},\bfv\right)\\
&\hspace{2.0cm}-\left(\div\bfv,p^{n+1}\right)+
c\left(\bflambda^{n+1},\bfv(\bfX^n)\right)=0&&\forall\bfv\in\Huo\\
&\left(\div\bfu^{n+1},q\right)=0&&\forall q\in\Ldo\\
&\dr\left(\frac{\bfX^{n+1}-2\bfX^n+\bfX^{n-1}}{\dt^2},\bfz\right)_\B
+\left(\bbP(\bbF^{n+1}),\Grads\bfz\right)_\B\\
&\hspace{5.1cm}-c\left(\bflambda^{n+1},\bfz\right)=0&&\forall\bfz\in\Hub\\
&c\left(\bfmu,\bfu^{n+1}(\bfX^n)-\frac{\bfX^{n+1}-\bfX^n}{\dt}\right)=0
&&\forall\bfmu\in\bfLambda\\
&\aligned
&\bfu^0=\bfu_0&&\text{in }\Omega\\
&\bfX^0=\bfX_0&&\text{in }\B\\
&\frac{\bfX^0-\bfX^{-1}}{\dt}=\bfX_1&&\text{in }\B.
\endaligned
\endaligned
\label{eq:semi-impl}
\end{equation}
Moreover, in each particular situation, the quantity $\bbP(\bbF^{n+1})$ should
also need a linearization in order to avoid the presence of fully
implicit terms.

The following stability estimate was proved in~\cite{BCG}.

\begin{proposition}

Let us assume that the potential energy density $W$ is a $C^1$ convex
function, then the solution of~\eqref{eq:semi-impl} satisfies
\[
\aligned
&\frac{\rho_f}{2\dt}\left(\|\bfu^{n+1}\|^2_0-\|\bfu^n\|^2_0\right)+
\nu\|\symgrad(\bfu^{n+1})\|^2_0\\
&+\frac{\dr}{2\dt}\left(\left\|\frac{\bfX^{n+1}-\bfX^n}{\dt}\right\|^2_{0,\B}
-\left\|\frac{\bfX^n-\bfX^{n-1}}{\dt}\right\|^2_{0,\B}\right)+
\frac{E(\bfX^{n+1})-E(\bfX^n)}{\dt} \le0.
\endaligned
\]
\label{pr:stab}
\end{proposition}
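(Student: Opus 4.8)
The plan is to derive the stability estimate by choosing the test functions in~\eqref{eq:semi-impl} to be the discrete solution itself (or its increments), following the classical energy-method template for saddle-point evolution problems, and then exploiting the skew-symmetry of $b$ and the convexity of $W$. First I would take $\bfv=\bfu^{n+1}$ in the first equation, $q=p^{n+1}$ in the second, $\bfz=(\bfX^{n+1}-\bfX^n)/\dt$ in the third, and $\bfmu=\bflambda^{n+1}$ in the fourth. With these choices the pressure term $(\div\bfu^{n+1},p^{n+1})$ vanishes by the incompressibility equation, and — crucially — the two coupling terms involving $c$ cancel: the term $c(\bflambda^{n+1},\bfu^{n+1}(\bfX^n))$ coming from the fluid equation is matched against $c(\bflambda^{n+1},(\bfX^{n+1}-\bfX^n)/\dt)$ from the solid equation precisely because the constraint equation (tested with $\bfmu=\bflambda^{n+1}$) forces $c(\bflambda^{n+1},\bfu^{n+1}(\bfX^n)-(\bfX^{n+1}-\bfX^n)/\dt)=0$. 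Note that the semi-implicit coupling through $\bfX^n$ (rather than $\bfX^{n+1}$) is exactly what makes this cancellation work cleanly at the discrete level.

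Next I would treat the remaining terms one at a time. The convective term $b(\bfu^n,\bfu^{n+1},\bfu^{n+1})$ vanishes identically by the skew-symmetry built into the definition of $b$, since $b(\bfu,\bfv,\bfv)=0$ for all arguments. The viscous term $a(\bfu^{n+1},\bfu^{n+1})=\|\nu^{1/2}\symgrad(\bfu^{n+1})\|_0^2$ is already in the desired form. For the discrete time derivatives I would use the elementary identity $(a-b,a)=\tfrac12(\|a\|^2-\|b\|^2)+\tfrac12\|a-b\|^2\ge\tfrac12(\|a\|^2-\|b\|^2)$: applied to $\rho_f(\bfu^{n+1}-\bfu^n,\bfu^{n+1})$ this yields the telescoping fluid kinetic energy difference (discarding the nonnegative remainder), and applied to the second-difference term $\dr((\bfX^{n+1}-2\bfX^n+\bfX^{n-1})/\dt^2,(\bfX^{n+1}-\bfX^n)/\dt)_\B$ — rewritten as $\dr\dt^{-2}\bigl((\bfX^{n+1}-\bfX^n)-(\bfX^n-\bfX^{n-1}),(\bfX^{n+1}-\bfX^n)\bigr)_\B$ — it yields the telescoping solid kinetic energy difference in terms of the discrete velocities $(\bfX^{k+1}-\bfX^k)/\dt$.

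The one step that needs the convexity hypothesis is the elastic term $(\bbP(\bbF^{n+1}),\Grads(\bfX^{n+1}-\bfX^n)/\dt)_\B$. Since $\bbP=\partial W/\partial\bbF$ and $\Grads\bfX^k=\bbF^k$, the integrand is $\tfrac1{\dt}\,\partial_\bbF W(\bbF^{n+1}):(\bbF^{n+1}-\bbF^n)$, and convexity of $W$ in $\bbF$ gives the one-sided bound $\partial_\bbF W(\bbF^{n+1}):(\bbF^{n+1}-\bbF^n)\ge W(\bbF^{n+1})-W(\bbF^n)$ pointwise; integrating over $\B$ produces the term $(E(\bfX^{n+1})-E(\bfX^n))/\dt$ as a lower bound. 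Collecting all the pieces — the fluid kinetic telescoping, the nonnegative viscous term, the solid kinetic telescoping, and the elastic energy difference — and moving the discarded nonnegative remainders to give an inequality yields exactly the asserted estimate. The main (and essentially only) obstacle is making sure the coupling terms cancel exactly, which is why the test-function choice must be coordinated across all four equations; everything else is the standard energy bookkeeping. The $C^1$ regularity of $W$ is needed only to make sense of $\bbP(\bbF^{n+1})$ and to justify the gradient inequality for convex functions.
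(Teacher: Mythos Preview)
Your argument is correct and is exactly the standard energy-method proof that the paper has in mind (the paper does not reproduce the proof here but cites~\cite{BCG}, where precisely this test-function choice and the convexity bound $\partial_\bbF W(\bbF^{n+1}):(\bbF^{n+1}-\bbF^n)\ge W(\bbF^{n+1})-W(\bbF^n)$ are used). There is nothing to add: the cancellation of the coupling terms via the constraint equation, the skew-symmetry of $b$, the telescoping identity $(a-b,a)=\tfrac12(\|a\|^2-\|b\|^2)+\tfrac12\|a-b\|^2$, and the convexity step are the complete ingredients.
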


\begin{remark}

The stability results presented in Proposition~\ref{pr:stab} is a significant
improvement over other schemes used for the approximation of fluid-structure
interactions problems.
A keystone result in this framework is reported in~\cite{causin} where it is
shown that schemes based on the Arbitrary Lagrangian Eulerian (ALE) approach
cannot be stable, when the density of the fluid is close to that of the solid,
unless they are fully implicit. Within the Immersed Boundary Method (IBM),
when finite differences are used for the space discretization, it is shown
that unconditional stability estimates can be obtained in some
circumstances~\cite{nfgk}. In our previous works we have shown a conditional
stability, subject to a CFL condition, for the FE-IBM~\cite{BGHP,CFL}.

\end{remark}

In~\cite{wolf} we investigated how to apply higher order schemes.
We have to pay attention to the term involving the second time derivative of
$\bfX$; as it is common in this case, we reduce the order of the time
derivative by introducing a new variable $\bfw$ corresponding to the first
derivative of $\bfX$ (see also Problem~\ref{pb:DLM-lin}).
For instance, a scheme based on the $\mathsf{BDF2}$ discretization reads
\begin{equation}
\aligned
&\rho_f\left(\frac{3\bfu^{n+1}-4\bfu^n+\bfu^{n-1}}{2\dt},\bfv\right)
+b\left(\bfu^n,\bfu^{n+1},\bfv\right)\\
&\hspace{0.5cm}+a\left(\bfu^{n+1},\bfv\right)
-\left(\div\bfv,p^{n+1}\right)
+c\left(\bflambda^{n+1},\bfv(\bfX^n)\right)=0&&\forall\bfv\in\Huo\\
&\left(\div\bfu^{n+1},q\right)=0&&\forall q\in\Ldo\\
&\left(\frac{3\bfX^{n+1}-4\bfX^n+\bfX^{n-1}}{2\dt},\bfy\right)_\B=
(\bfw^{n+1},\bfy)_\B&&\forall\bfy\in\bfL^2(\B)\\
&\dr\left(\frac{3\bfw^{n+1}-4\bfw^{n}+\bfw^{n-1}}{2\dt},\bfz\right)_\B
+\left(\bbP(\bbF^{n+1}),\Grads\bfz\right)_\B\\
&\hspace{5.7cm}-c\left(\bflambda^{n+1},\bfz\right)=0&&\forall\bfz\in\Hub\\
&c\left(\bfmu,\bfu^{n+1}(\bfX^n)-\frac{3\bfX^{n+1}-4\bfX^n+
\bfX^{n-1}}{2\dt}\right)=0&&\forall\bfmu\in\bfLambda\\
&\aligned
&\bfu^0=\bfu_0&&\text{in }\Omega\\
&\bfX^0=\bfX_0&&\text{in }\B\\
&\bfw^0=\bfX_1&&\text{in }\B.
\endaligned
\endaligned
\label{eq:BDF}
\end{equation}

The following stability estimate was proved in~\cite{wolf}.

\begin{proposition}

Let us assume that the Piola--Kirchhoff tensor is linear
$\bbP(\bbF)=\kappa\bbF$, then the solution of~\eqref{eq:BDF} satisfies
\[
\aligned
&\frac{\rho_f}{4\dt} \left[ \left\|\bfu_h^{n+1}\right\|_\Omega^2 + \left\|2
\bfu_h^{n+1} - \bfu_h^n\right\|_\Omega^2 - \left\|\bfu_h^n\right\|_\Omega^2 -
\left\|2 \bfu_h^n - \bfu_h^{n-1}\right\|_\Omega^2\right.\\
&\qquad \left.  + \left\|\bfu_h^{n+1}-2\bfu_h^n + \bfu_h^{n-1}\right\|_\Omega^2\right]
+ \nu \left\|\symgrad(\bfu_h^{n+1})\right\|_\Omega^2  \\
&\qquad + 
\frac{\dr}{4\dt^2}\left(\|\dot\bfX_h^{n+1}\|_\B^2+\|2\dot\bfX_h^{n+1}-\dot\bfX_h^n\|^2_\B\right.\\
&\qquad\left.
-\|\dot\bfX_h^n\|^2_\B-\|2\dot\bfX_h^n-\dot\bfX_h^{n-1}\|^2_\B
+\|\dot\bfX_h^{n+1}-2\dot\bfX_h^n+\dot\bfX_h^{n-1}\|^2_\B\right) \\
&\qquad +
\frac{\kappa}{4\dt}
\left(\|\bbF_h^{n+1}\|_\B^2+\|2\bbF_h^{n+1}-\bbF_h^n\|^2_\B\right.\\
&\qquad\left.
-\|\bbF_h^n\|^2_\B-\|2\bbF_h^n-\bbF_h^{n-1}\|^2_\B
+\|\bbF_h^{n+1}-2\bbF_h^n+\bbF_h^{n-1}\|^2_\B\right)\le0.
\endaligned
\]

\end{proposition}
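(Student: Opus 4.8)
The plan is to carry out the $\BDF$ energy argument: test each of the five relations in~\eqref{eq:BDF} with a carefully chosen function and add the outcomes, arranging that the pressure term, the convective term, and the coupling through $\bflambda$ all cancel, so that what remains is a telescoping combination of quadratic $\BDF$ energies together with the viscous dissipation. Concretely, I would take $\bfv=\bfu^{n+1}$ in the fluid momentum equation, $q=p^{n+1}$ in the incompressibility equation, $\bfz=\bfw^{n+1}$ in the solid momentum equation, and $\bfmu=\bflambda^{n+1}$ in the kinematic constraint.

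The cancellations are the bookkeeping part. The trilinear term disappears because $b(\bfu^n,\bfu^{n+1},\bfu^{n+1})=0$ by the skew-symmetric definition of $b$. The term $-(\div\bfu^{n+1},p^{n+1})$ disappears once $(\div\bfu^{n+1},p^{n+1})=0$ is substituted from the incompressibility equation. For the coupling, testing the constraint with $\bflambda^{n+1}$ gives $c\bigl(\bflambda^{n+1},\bfu^{n+1}(\bfX^n)\bigr)=c\bigl(\bflambda^{n+1},(3\bfX^{n+1}-4\bfX^n+\bfX^{n-1})/(2\dt)\bigr)$; since the third equation of~\eqref{eq:BDF} holds for every $\bfy\in\Ldb$, it identifies $(3\bfX^{n+1}-4\bfX^n+\bfX^{n-1})/(2\dt)$ with $\bfw^{n+1}$ in $\Ldb$, whence $c(\bflambda^{n+1},\bfu^{n+1}(\bfX^n))=c(\bflambda^{n+1},\bfw^{n+1})$, which is exactly the term generated by testing the solid equation with $\bfw^{n+1}$, so the two cancel. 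Applying $\Grads$ to the same identity yields $\Grads\bfw^{n+1}=(3\bbF^{n+1}-4\bbF^n+\bbF^{n-1})/(2\dt)$, so, using the linearity $\bbP(\bbF)=\kappa\bbF$, the elastic term turns into $\kappa\bigl(\bbF^{n+1},(3\bbF^{n+1}-4\bbF^n+\bbF^{n-1})/(2\dt)\bigr)_\B$.

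What is left is
\[
\rho_f\Bigl(\frac{3\bfu^{n+1}-4\bfu^n+\bfu^{n-1}}{2\dt},\bfu^{n+1}\Bigr)
+a(\bfu^{n+1},\bfu^{n+1})
+\dr\Bigl(\frac{3\bfw^{n+1}-4\bfw^n+\bfw^{n-1}}{2\dt},\bfw^{n+1}\Bigr)_\B
+\kappa\Bigl(\bbF^{n+1},\frac{3\bbF^{n+1}-4\bbF^n+\bbF^{n-1}}{2\dt}\Bigr)_\B=0 .
\]
Here $a(\bfu^{n+1},\bfu^{n+1})=\nu\|\symgrad(\bfu^{n+1})\|_\Omega^2\ge0$ is the viscous dissipation, and each of the three remaining inner products is handled by the Dahlquist $G$-stability identity for $\BDF$: for any $a^{n-1},a^n,a^{n+1}$ in a Hilbert space,
\[
\bigl(3a^{n+1}-4a^n+a^{n-1},a^{n+1}\bigr)
=\tfrac12\Bigl(\|a^{n+1}\|^2+\|2a^{n+1}-a^n\|^2-\|a^n\|^2-\|2a^n-a^{n-1}\|^2+\|a^{n+1}-2a^n+a^{n-1}\|^2\Bigr),
\]
which is a one-line algebraic check. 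Using it with $a=\bfu$, with $a=\bfw$ (which, by the third equation, is the $\BDF$ divided difference of $\bfX$), and with $a=\bbF$ reproduces, up to the scalar factors displayed in the statement, the three bracketed $G$-energies, and the asserted inequality follows; in fact one obtains equality, the stated inequality being the form that survives in less restrictive settings.

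The delicate points are the matching of the two occurrences of $c(\bflambda^{n+1},\cdot)$ — one must verify that the discrete solid velocity entering the constraint is precisely the function used to test the solid equation, which is what forces the cancellation — and the observation that linearity of $\bbP$ is genuinely used here: for a merely convex energy density $W$ the elastic contribution $(\bbP(\bbF^{n+1}),\Grads\bfw^{n+1})_\B$ does not reduce to a quadratic $G$-form, and one would only recover a convexity-based bound in the spirit of Proposition~\ref{pr:stab} rather than the clean telescoping estimate. No step imposes any condition on $\dt$, consistently with unconditional stability; summing the estimate over $n$ telescopes the $G$-energies and bounds the solution by the data.
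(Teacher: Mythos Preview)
Your argument is correct and is precisely the standard $G$-stability energy proof for \BDF: test the five relations with $\bfu^{n+1}$, $p^{n+1}$, a suitable $\bfy$, $\bfw^{n+1}$, $\bflambda^{n+1}$, use the skew-symmetry of $b$, the incompressibility, and the identification $\bfw^{n+1}=(3\bfX^{n+1}-4\bfX^n+\bfX^{n-1})/(2\dt)$ from the third relation to cancel the two occurrences of $c(\bflambda^{n+1},\cdot)$, and then apply the algebraic identity $(3a^{n+1}-4a^n+a^{n-1},a^{n+1})=\tfrac12\bigl(\|a^{n+1}\|^2+\|2a^{n+1}-a^n\|^2-\|a^n\|^2-\|2a^n-a^{n-1}\|^2+\|a^{n+1}-2a^n+a^{n-1}\|^2\bigr)$ to each of the three inner products. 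The paper does not reproduce a proof here, referring instead to~\cite{wolf}; your outline is the argument one finds there. Two small remarks: the equality $\bfw^{n+1}=(3\bfX^{n+1}-4\bfX^n+\bfX^{n-1})/(2\dt)$ holds as an identity between elements of $\Hub$ (both sides lie in $\Hub$ and the third relation forces their $\Ldb$-difference to vanish), which is what justifies passing it through $c$ and through $\Grads$ as you do; and the symbol $\dot\bfX_h^k$ in the statement is exactly the discrete solid velocity $\bfw_h^k$, so your use of the identity with $a=\bfw$ is the intended one.
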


We refer the interested reader to~\cite{wolf} for other second order schemes
based on the Crank--Nicolson method and to their corresponding stability
properties which are analogous of the ones presented above. Some numerical
experiments will be presented in Section~\ref{se:numerical}.

\section{Analysis and finite element approximation of the associated saddle
point problem}
\label{se:saddle}

In this section we discuss the finite element discretization in space of our
problem.
We consider the semi-implicit version of one of the schemes introduced in the
previous section. At each time step we have to solve a stationary problem that
we are going to present, approximate, and analyze. We consider
$\Xb\in\bfW^{1,\infty}(\B)^d$ that corresponds to $\bfX^n$ and
$\overline\bfu\in\bfL^\infty(\Omega)$ that corresponds to $\bfu^n$. In this
section we deal with the following Piola--Kirchhoff tensor
\[
\bbP(\bbF)=\kappa\bbF=\kappa\Grads\bfX.
\]
Moreover, we define the following bilinear forms
\[
\aligned
&a_f(\bfu,\bfv)=\alpha(\bfu,\bfv)+a(\bfu,\bfv)+b(\overline\bfu,\bfu,\bfv)
&&\forall\bfu,\bfv\in\Huo\\
&a_s(\bfX,\bfz)=\beta(\bfX,\bfz)_{\B}+\gamma(\Grads\bfX,\Grads\bfz)_{\B}&&
\forall\bfX,\bfz\in\Hub,
\endaligned
\]
where the constants $\alpha$, $\beta$, and $\gamma$ depend on the time step
and on the coefficients of our model. For instance, in the case of backward
Euler method we have
\[
\alpha=\rho_f/\dt,\ \beta=\dr/\dt,\ \gamma=\kappa\dt.
\]
Then, setting $\bfu=\bfu^{n+1}$, $p=p^{n+1}$, $\bfX=\bfX^{n+1}/\dt$,
$\bflambda=\bflambda^{n+1}$, we are led to the following problem.

\begin{problem}[Saddle point problem]
Given $\bff\in\bfL^2(\Omega)$, $\bfg\in\bfL^2(\B)$, and $\bfd\in\bfL^2(\B)$,
find $\bfu\in\Huo$, $p\in\Ldo$, $\bfX\in\Hub$, and $\bflambda\in\bfLambda$
such that
\begin{equation}
\label{eq:saddle}
\aligned
&a_f(\bfu,\bfv)-(\div\bfv,p)+c(\bflambda,\bfv(\Xb))=(\bff,\bfv)
&&\forall\bfv\in\Huo\\
&(\div\bfu,q)=0&&\forall q\in\Ldo\\
&a_s(\bfX,\bfz)-c(\bflambda,\bfz)=(\bfg,\bfz)_{\B}&&\forall\bfz\in\Hub\\
&c(\bfmu,\bfu(\Xb)-\bfX)=c(\bfmu,\bfd)&&\forall\bfmu\in\bfLambda.
\endaligned
\end{equation}
\label{pb:saddle}
\end{problem}

In general, $\bff$, $\bfg$, and $\bfd$ are related to quantities at previous
time steps. For instance, in the case of the backward Euler scheme we have
\[
\bff=\frac{\rho_f}{\dt}\bfu^n,\quad
\bfg=\frac{\dr}{\dt^2}\left(2\bfX^n-\bfX^{n-1}\right),\quad
\bfd=-\frac1{\dt}\bfX^n.
\]

Problem~\ref{pb:saddle}, after converting bilinear forms into linear operators
with natural notation, reads
\[
\left[
\begin{array}{ccc|c}
\m{A}_f&\m{B}_f^\top&\m{0}&\m{C}_f^\top\\
\m{B}_f&\m{0}&\m{0}&\m{0}\\
\m{0}&\m{0}&\m{A}_s&-\m{C}_s^\top\\
\hline
\m{C}_f&\m{0}&-\m{C}_s&\m{0}
\end{array}
\right]
\left[\begin{array}{c}
\m{u}\\\m{p}\\\m{X}\\\hline\m{\lambda}
\end{array}\right]
=\left[\begin{array}{c}
\m{f}\\\m{0}\\\m{g}\\\hline\m{d}
\end{array}\right]
\]
which has a saddle point structure. While the analysis of this problem has
been published in~\cite{BG}, in~\cite{BGarXiv} we observed that it was more
convenient to rearrange the unknowns as follows
\[
\left[
\begin{array}{ccc|c}
\m{A}_f&\m{0}&\m{C}_f^\top&\m{B}_f^\top\\
\m{0}&\m{A}_s&-\m{C}_s^\top&\m{0}\\
\m{C}_f&-\m{C}_s&\m{0}&\m{0}\\
\hline
\m{B}_f&\m{0}&\m{0}&\m{0}
\end{array}
\right]
\left[\begin{array}{c} \m{u}\\ \m{X} \\ \m{\lambda} \\ \hline\m{p} \end{array}\right]
=\left[\begin{array}{c} \m{f}\\ \m{g} \\ \m{d}\\ \hline\m{0}\end{array}\right].
\]

The saddle point structure is evident by introducing the following operators:
$\bbA:\bbV\to\bbV'$ and $\bbB:\bbV\to\Ldo'$ given by
\begin{equation}
\bbA=
\left[
\begin{array}{cc;{2pt/2pt}c}
\m{A}_f&\m{0}&\m{C}_f^\top\\
\m{0}&\m{A}_s&-\m{C}_s^\top\\
\hdashline[2pt/2pt]
\m{C}_f&-\m{C}_s&\m{0}
\end{array}
\right],\qquad
\bbB=
\left[
\begin{array}{cc;{2pt/2pt}c}
\m{B}_f&\m{0}&\m{0}
\end{array}
\right],
\label{eq:underline}
\end{equation}
where $\bbV=\Huo\times\Hub\times\bfLambda$ equipped with the graph norm.
In particular the operator $\bbA$ has itself a saddle point structure which is
highlighted by the dashed lines.  In~\cite{BGarXiv} it is shown that
Problem~\ref{pb:saddle} is well posed by proving the following properties.
\begin{itemize}
\item The operator $\bbA$ is invertible in the kernel of $\bbB$. 
\item The operator $\bbB$ is surjective.
\end{itemize}
Since $\bbA$ is characterized by a saddle point structure, its invertibility
is proved by showing the validity of two inf-sup conditions, while the
surjectivity of $\bbB$ follows from the standard inf-sup condition of
Stokes-like problems. For the sake of completeness, we recall the statements
of the results that are needed in order to prove that $\bbA$ is invertible in
the kernel of $\bbB$.

We start by observing that $\bfV=(\bfv,\bfX,\bfmu)\in\bbV$ belongs to the
kernel of $\bbB$ if and only if $\div\bfv=0$. We recall that the divergence
free subspace of $\Huo$ was denoted by $\Vo$.

In order to study the operator $\bbA$ we use the following kernel (see
also~\eqref{eq:kernel})
\[
\bbK=\left\{(\bfv,\bfz)\in\Vo\times\Hub:c\left(\bfmu,\bfv(\Xb)-\bfz\right)=0\
\forall\bfmu\in\bfLambda\right\}
\]
and we show that there exists $\alpha_0>0$ such that
\[
a_f(\bfu,\bfu)+a_s(\bfX,\bfX)\ge\alpha_0
\left(\|\bfu\|^2_1+\|\bfX\|^2_{1,\B}\right)
\quad\forall(\bfu,\bfX)\in\bbK.
\]
The invertibility of $\bbA$ in the kernel of $\bbB$ is then implied by the
following inf-sup condition: there exists a constant $\beta_0>0$ such that
\[
\sup_{(\bfv,\bfz)\in\Vo\times\Hub}\frac{c\left(\bfmu,\bfv(\Xb)-\bfz\right)}
{\left(\|\bfv\|_1^2+\|\bfz\|^2_{1,\B}\right)^{1/2}}\ge
\beta_0\|\bfmu\|_{\bfLambda}\quad\forall\bfmu\in\bfLambda.
\]

The above estimate holds true for both choices of the bilinear form $c$
defined in~\eqref{eq:choice1} and~\eqref{eq:choice2}, and is a natural
consequence of the definition of the norm of $\bfLambda$.

\subsection{Finite element discretization}

The finite element discretization of Problem~\ref{pb:saddle} is performed by
considering finite dimensional subspaces $\bfV_h\subset\Huo$,
$Q_h\subset\Ldo$, $\bfS_h\subset\Hub$, and $\bfLambda_h\subset\bfLambda$.
We assume that the spaces $\bfV_h$ and $Q_h$ are an inf-sup stable choice for
the approximation of the Stokes problem.

In this paper we consider a more general setting than the one studied
in~\cite{BG} where we assumed that $\bfLambda_h$ and $\bfS_h$ were equal to
each other.

The finite element spaces are constructed starting from three \emph{fixed}
shape-regular meshes: $\T_{\bfV}$ with mesh size $\hx$ for the domain
$\Omega$, $\T_{\bfS}$ with mesh size $\hs$ for the domain $\B$, and
$\T_{\bfLambda}$ with mesh size $\hl$ for the domain $\B$.
The first mesh is associated with the use of the Eulerian variable $\bfx$,
while the other two meshes correspond to the Lagrangian variable $\bfs$.
Here we are assuming that $\Omega$ and $\B$ are polytopes and that $\B$
corresponds to the initial configuration of the solid. If this is not the
case, then further approximations should be introduced. In any case a crucial
property of our model is that the meshes are fixed during the entire evolution
of the system.

The discrete counterpart of Problem~\ref{pb:saddle} can be written as follows.

\begin{problem}[Discrete saddle point problem]
Given $\bff\in\bfL^2(\Omega)$, $\bfg\in\bfL^2(\B)$, and $\bfd\in\bfL^2(\B)$,
find $\bfu_h\in\bfV_h$, $p_h\in Q_h$, $\bfX_h\in\bfS_h$, and
$\bflambda_h\in\bfLambda_h$ such that
\begin{equation}
\label{eq:saddleh}
\aligned
&a_f(\bfu_h,\bfv)-(\div\bfv,p_h)+\hat c(\bflambda_h,\bfv(\Xb))=(\bff,\bfv)
&&\forall\bfv\in\bfV_h\\
&(\div\bfu_h,q)=0&&\forall q\in Q_h\\
&a_s(\bfX_h,\bfz)-\hat c(\bflambda_h,\bfz)=(\bfg,\bfz)_{\B}&&\forall\bfz\in\bfS_h\\
&\hat c(\bfmu,\bfu_h(\Xb)-\bfX_h)=\hat c(\bfmu,\bfd)&&\forall\bfmu\in\bfLambda_h.
\endaligned
\end{equation}
\label{pb:saddleh}
\end{problem}

In the formulation presented above we used the notation $\hat c$ for the
discrete realization of the bilinear form $c$ considered in
Problem~\ref{pb:saddle}. Let us detail how this realization looks like in the
two cases described in~\eqref{eq:choice1} and~\eqref{eq:choice2}.
If $\bfLambda=\bfLambda_1$, observing that any reasonable finite element space
$\bfLambda_h$ is included in $\bfL^2(\B)$, it is possible to identify the
duality pairing $c_1$ with the inner product of $\bfL^2(\B)$, so that we take
\[
\hat c_1(\bfmu,\bfz)=(\bfmu,\bfz)_{\B}.
\]
On the other hand, in the case $\bfLambda=\bfLambda_2$ we can take the same
bilinear form as in the continuous case
\[
\hat c_2(\bfmu,\bfz)=(\bfmu,\bfz)_{\B}+(\Grads\bfmu,\Grads\bfz)_{\B}.
\]
We are going to use the same notation $c$ for both approaches as for the
continuous case. When we need to refer explicitly to one of the two
formulations, we shall use the full notation.

The analysis of the discrete problem makes use of the same technique that we
described above for the continuous case. We report the main ingredients of the
proof in a more general setting than it was presented in~\cite{BGarXiv}; this
is also the occasion to amend some detail of~\cite{BG}.

Using the notation introduced above for the space $\bbV$, we introduce as
follows the bilinear forms $\calA:\bbV\times\bbV\to\RE$ and
$\calB:\bbV\times\Ldo\to\RE$ in order to highlight the saddle point structure
of the problem and to make easier the description of the result
\[
\aligned
&\calA(\bfU,\bfV)=a_f(\bfu,\bfv)+a_s(\bfX,\bfz)+c(\bflambda,\bfv(\Xb)-\bfz)
-c(\bfmu,\bfu(\Xb)-\bfX)\\
&\calB(\bfV,q)=(\div\bfv,q),
\endaligned
\]
where we used the notation $\bfU=(\bfu,\bfX,\bflambda)$ and
$\bfV=(\bfv,\bfz,\bfmu)$.
It is clear that the bilinear forms $\calA$ and $\calB$ correspond to the
operators $\bbA$ and $\bbB$ defined above.

We denote by $\bbV_h=\bfV_h\times\bfS_h\times\bfLambda_h$ the subspace of
$\bbV$ that we are using for the approximation. Hence,
Problem~\ref{pb:saddleh} reads: given $\bff\in\bfL^2(\Omega)$,
$\bfg\in\bfL^2(\B)$, and $\bfd\in\Hub$, find $(\bfU_h,p_h)\in\bbV_h\times Q_h$
such that
\begin{equation}
\label{eq:misto}
\aligned
&\calA(\bfU_h,\bfV)+\calB(\bfV,p_h)=(\bff,\bfv)+(\bfg,\bfz)_{\B}-c(\bfmu,\bfd)
&&\forall\bfV\in\bbV_h\\
&\calB(\bfU_h,q)=0&&\forall q\in Q_h.
\endaligned
\end{equation}

Let $\bfV_{0,h}$ be the subset of $\bfV_h$ containing the discretely
divergence free vectorfields, that is $\bfv_h\in\bfV_{0,h}$ if and only if
\[
(\div\bfv_h,q)=0\quad\forall q\in Q_h.
\]
We are going to use the discrete kernel
\[
\bbK_h=\left\{(\bfv_h,\bfz_h)\in\bfV_{0,h}\times\bfS_h:
c\left(\bfmu,\bfv_h(\Xb)-\bfz_h\right)=0\ \forall\bfmu\in\bfLambda_h\right\}.
\]

We state the following compatibility between the spaces $\bfS_h$ and
$\bfLambda_h$ that will be useful in the sequel.

\begin{ass}
There exists a constant $\zeta>0$ such that for all $\bfmu_h\in\bfLambda_h$ it
holds
\begin{equation}
\sup_{\bfz_h\in\bfS_h}
\frac{c(\bfmu_h,\bfz_h)}{\|\bfz_h\|_{1,\B}}\ge\zeta\|\bfmu_h\|_{\bfLambda}.
\label{eq:ass}
\end{equation}
\label{as:ass}
\end{ass}

In order to show the stability of~\eqref{eq:misto} we need to prove the
following inf-sup conditions~\cite{bbf}.

\begin{itemize}
\item There exists $\gamma_1>0$ such that
\begin{equation}
\inf_{\bfU_h\in\bbK_h}\sup_{\bfV_h\in\bbK_h}
\frac{\calA(\bfU_h,\bfV_h)}{\|\bfU_h\|_{\bbV}\|\bfV_h\|_{\bbV}}\ge\gamma_1.
\label{eq:elker}
\end{equation}
\item There exists $\gamma_2>0$ such that
\begin{equation}
\inf_{q_h\in Q_h}\sup_{\bfV_h\in\bbV_h}
\frac{\calB(\bfV_h,q_h)}{\|q_h\|_0\|\bfV_h\|_{\bbV}}\ge\gamma_2.
\label{eq:infsup}
\end{equation}
\end{itemize}

The inf-sup condition for the bilinear form $\calB$ is immediate if the spaces
$\bfV_h$ and $Q_h$ are a good Stokes pair. Indeed it is easy to see that
\[
\inf_{q_h\in Q_h}\sup_{\bfV_h\in\bbV_h}
\frac{\calB(\bfV_h,q_h)}{\|q_h\|_0\|\bfV_h\|_{\bbV}}=
\inf_{q_h\in Q_h}\sup_{\bfv_h\in\bfV_h}
\frac{(\div\bfv_h,q_h)}{\|q_h\|_0\|\bfv_h\|_1}\ge\gamma_2,
\]
where $\gamma_2$ is the inf-sup constant related to $\bfV_h$ and $Q_h$ for the
divergence operator.

In order to show the inf-sup condition for the bilinear form $\calA$, we
start with the following proposition.

\begin{proposition}
For all $\beta\ge0$, there exists a constant $\alpha_1>0$ not depending on the
mesh sizes such that
\[
a_f(\bfu_h,\bfu_h)+a_s(\bfX_h,\bfX_h)\ge\alpha_1
\left(\|\bfu_h\|^2_1+\|\bfX_h\|^2_{1,\B}\right)
\quad\forall(\bfu_h,\bfX_h)\in\bbK_h.
\]
\label{pr:elker}
\end{proposition}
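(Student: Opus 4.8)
The plan is to prove coercivity of $a_f$ on $\bfu_h$ by Korn's inequality, to extract from $a_s$ only the elastic seminorm of $\bfX_h$ (which is all that is available when $\beta=0$), and then to recover the missing $\bfL^2(\B)$-control of $\bfX_h$ from $\bfu_h$ through the kernel constraint together with Assumption~\ref{as:ass}.

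For the fluid part, the trilinear form $b$ is skew-symmetric in its last two slots, so $b(\overline\bfu,\bfu_h,\bfu_h)=0$; since $\alpha\ge0$ and $\nu\ge\min(\nu_f,\nu_s)>0$, Korn's inequality on $\Huo$ (here $\bfu_h\in\bfV_{0,h}\subset\Huo$) gives $a_f(\bfu_h,\bfu_h)\ge c_\Omega\|\bfu_h\|_1^2$ with $c_\Omega$ depending only on $\Omega$, $\nu_f$, $\nu_s$. For the solid part, keeping only the elastic term (legitimate since $\beta\ge0$), $a_s(\bfX_h,\bfX_h)\ge\gamma\|\Grads\bfX_h\|_{0,\B}^2$ with $\gamma>0$. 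Hence the statement reduces to the mesh-independent estimate
\[
\|\bfX_h\|_{1,\B}\le A\,\|\bfu_h\|_1+B\,\|\Grads\bfX_h\|_{0,\B}\qquad\forall(\bfu_h,\bfX_h)\in\bbK_h,
\]
because then $\|\bfu_h\|_1^2+\|\bfX_h\|_{1,\B}^2\le(1+2A^2)\|\bfu_h\|_1^2+2B^2\|\Grads\bfX_h\|_{0,\B}^2$, and a comparison with the two lower bounds yields $\alpha_1=\min\!\big(c_\Omega/(1+2A^2),\,\gamma/(2B^2)\big)$, independent of the mesh sizes (and, incidentally, of $\beta$).

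To prove the displacement estimate, I would set $\ell\in\bfLambda_h'$, $\ell(\bfmu_h)=c(\bfmu_h,\bfu_h(\Xb))$; by continuity of $c$ and the composition bound $\|\bfv(\Xb)\|_{1,\B}\le C_\Xb\|\bfv\|_1$ on $\Huo$ (implicit already in the well-posedness of Problem~\ref{pb:saddle}) one has $\|\ell\|_{\bfLambda_h'}\le\|c\|\,C_\Xb\,\|\bfu_h\|_1$. Assumption~\ref{as:ass} is equivalent to the surjectivity of the operator $\bfS_h\ni\bfz\mapsto c(\cdot,\bfz)\in\bfLambda_h'$ with a right inverse of norm at most $\zeta^{-1}$ (see~\cite{bbf}), so there exists $\widehat{\bfX}_h\in\bfS_h$ with $c(\bfmu_h,\widehat{\bfX}_h)=\ell(\bfmu_h)$ for all $\bfmu_h\in\bfLambda_h$ and $\|\widehat{\bfX}_h\|_{1,\B}\le\zeta^{-1}\|c\|\,C_\Xb\,\|\bfu_h\|_1$. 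The constraint defining $\bbK_h$ reads $c(\bfmu_h,\bfX_h)=c(\bfmu_h,\bfu_h(\Xb))=\ell(\bfmu_h)$ for all $\bfmu_h\in\bfLambda_h$, whence $c(\bfmu_h,\bfX_h-\widehat{\bfX}_h)=0$ for all $\bfmu_h\in\bfLambda_h$. Since $\bfLambda_h$ contains the constant vector fields (as it does in all the cases considered, for both realizations of $c$), testing this identity against constants gives $\int_\B(\bfX_h-\widehat{\bfX}_h)=0$, and the Poincaré--Wirtinger inequality on $\B$ yields $\|\bfX_h-\widehat{\bfX}_h\|_{1,\B}\le C_{PW}\big(\|\Grads\bfX_h\|_{0,\B}+\|\widehat{\bfX}_h\|_{1,\B}\big)$ with $C_{PW}$ depending only on $\B$. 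The triangle inequality $\|\bfX_h\|_{1,\B}\le\|\widehat{\bfX}_h\|_{1,\B}+\|\bfX_h-\widehat{\bfX}_h\|_{1,\B}$ then gives the displacement estimate with $A=(1+C_{PW})\,\zeta^{-1}\|c\|\,C_\Xb$ and $B=C_{PW}$.

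The delicate point is the case $\beta=0$: at the continuous level the constraint $c(\bfmu,\bfv(\Xb)-\bfz)=0$ for all $\bfmu\in\bfLambda$ forces $\bfz=\bfv(\Xb)$ and hence gives full $\bfH^1(\B)$-control of $\bfX$, whereas discretely it fixes $\bfX_h$ only modulo the kernel of $c(\cdot,\cdot)|_{\bfLambda_h\times\bfS_h}$, on which $a_s$ with $\beta=0$ controls only the seminorm. Assumption~\ref{as:ass}, being an inf-sup condition with a constant $\zeta$ independent of the mesh sizes, combined with the fact that the constants are detected by $\bfLambda_h$, is exactly what restores a mesh-uniform Poincaré-type bound on that kernel; when $\beta>0$ one may instead keep the term $\beta\|\bfX_h\|_{0,\B}^2$ and conclude at once, but the argument above covers all $\beta\ge0$ with a single constant.
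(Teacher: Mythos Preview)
Your argument is correct, but it takes a detour through Assumption~\ref{as:ass} that the paper's proof avoids entirely. The paper proceeds more directly: after reaching the same starting point $a_f(\bfu_h,\bfu_h)+a_s(\bfX_h,\bfX_h)\ge C\|\bfu_h\|_1^2+\gamma\|\Grads\bfX_h\|_{0,\B}^2$, it splits $\bfX_h=\Xoh+(\bfX_h-\Xoh)$ with $\Xoh$ the mean value, bounds the zero-mean part by Poincar\'e, and then tests the kernel constraint $c(\bfmu_h,\bfu_h(\Xb)-\bfX_h)=0$ directly with the constant $\bfmu_h=\Xoh$. Since $c(\Xoh,\bfX_h-\Xoh)=0$ (the gradient term vanishes on constants, and the $L^2$ term vanishes by zero mean), this gives $\|\Xoh\|_{0,\B}^2=c(\Xoh,\bfu_h(\Xb))\le\|\Xoh\|_{0,\B}\|\bfu_h(\Xb)\|_{0,\B}$, hence $\|\Xoh\|_{0,\B}\le\|\bfu_h\|_0$. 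Thus the paper needs only that $\bfLambda_h$ contains the constants, whereas your construction of the lifting $\widehat{\bfX}_h$ genuinely requires the inf-sup~\eqref{eq:ass} with its uniform right inverse. Since the proposition as stated does not assume~\eqref{eq:ass} (that hypothesis is reserved for Proposition~\ref{pr:infsupA2}), your proof establishes a formally weaker result; the paper's route is both shorter and keeps the two propositions decoupled. On the other hand, your approach would still work if one replaced ``$\bfLambda_h$ contains constants'' by any inf-sup stable pairing, so it is in principle more flexible---but here that flexibility is not needed, because you also invoke constants in $\bfLambda_h$ for the Poincar\'e--Wirtinger step, so you end up using both hypotheses instead of one.
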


\begin{proof}
This proposition extends the conclusions of~\cite[Prop.~7]{BGarXiv}. For
$\beta>0$, the result follows directly from
\[
\aligned
a_f(\bfu_h,\bfu_h)+a_s(\bfX_h,\bfX_h)&\ge C\|\bfu_h\|^2_1+
\beta\|\bfX_h\|^2_{0,\B}+\kappa\|\Grads\bfX_h\|^2_{0,\B}\\
&\ge C\|\bfu_h\|^2_1+\min(\beta,\kappa)\|\bfX_h\|^2_{1,\B}.
\endaligned
\]

For $\beta=0$, we have
\begin{equation}
a_f(\bfu_h,\bfu_h)+a_s(\bfX_h,\bfX_h)
\ge C\|\bfu_h\|^2_1+\kappa\|\Grads\bfX_h\|^2_{0,\B}.
\label{eq:desired}
\end{equation}

The next step is to show that we can control $\|\bfX_h\|_{\OB}$ by the right
hand side of~\eqref{eq:desired}. This can be done at once for both possible
choices of $\bfLambda$ and $c$.
In order to use the Poincar\'e inequality we split $\bfX_h$ as the sum of its
mean value $\Xoh$ and the rest, so that
\[
\|\bfX_h\|_{\OB}\le\|\Xoh\|_{\OB}+\|\bfX_h-\Xoh\|_{\OB}\le
\|\Xoh\|_{\OB}+C\|\Grads\bfX_h\|_{\OB}.
\]

The constant part $\Xoh$ can be estimated by using the fact that the finite
element space $\bfLambda_h$ contains the global constant functions as follows.
Since $(\bfu_h,\bfX_h)\in\bbK_h$ we have
\[
c(\bfmu_h,\Xoh)=c(\bfmu_h,\bfu_h(\Xb))-c(\bfmu_h,\bfX_h-\Xoh)\qquad
\forall\bfmu_h\in\bfLambda_h.
\]
Choosing $\bfmu_h=\Xoh$ we obtain
\[
\|\Xoh\|_{\OB}^2=c(\Xoh,\bfu_h(\Xb))\le\|\Xoh\|_{\OB}\|\bfu_h(\Xb)\|_{\OB}.
\]
Indeed, if $\bfmu_h$ is constant then the term $c(\bfmu_h,\bfX_h-\Xoh)$
vanishes and, even in the case when $c$ is the scalar product in $\Hub$,
the term involving $\Grads\bfmu_h$ vanishes so that $c$ acts as the scalar
product in $\bfL^2(\B)$.
Hence, we get the final bound $\|\Xoh\|_{\OB}\le\|\bfu_h\|_0$.
\end{proof}

The next step consists in showing the following uniform inf-sup condition.

\begin{proposition}

Let us suppose that Assumption~\ref{as:ass} is satisfied. Then, for
$\beta_1=\zeta$ from~\eqref{eq:ass} we have
\[
\sup_{(\bfv_h,\bfz_h)\in\bfV_{0,h}\times\bfS_h}
\frac{c\left(\bfmu_h,\bfv_h(\Xb)-\bfz_h\right)}
{\left(\|\bfv_h\|_1^2+\|\bfz_h\|^2_{1,\B}\right)^{1/2}}\ge
\beta_1\|\bfmu_h\|_{\bfLambda_h}\quad\forall\bfmu_h\in\bfLambda_h.
\]
\label{pr:infsupA2}
\end{proposition}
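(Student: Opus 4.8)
The plan is to reduce the supremum over the product space $\bfV_{0,h}\times\bfS_h$ to the slice $\{\mathbf{0}\}\times\bfS_h$, which turns the claim into exactly Assumption~\ref{as:ass}. Fix $\bfmu_h\in\bfLambda_h$. Restricting the supremum to pairs of the form $(\mathbf{0},\bfz_h)$ with $\bfz_h\in\bfS_h$, the argument of $c$ becomes $\mathbf{0}(\Xb)-\bfz_h=-\bfz_h$ and the denominator reduces to $(0+\|\bfz_h\|_{1,\B}^2)^{1/2}=\|\bfz_h\|_{1,\B}$, so that
\[
\sup_{(\bfv_h,\bfz_h)\in\bfV_{0,h}\times\bfS_h}
\frac{c(\bfmu_h,\bfv_h(\Xb)-\bfz_h)}{(\|\bfv_h\|_1^2+\|\bfz_h\|^2_{1,\B})^{1/2}}
\ge
\sup_{\bfz_h\in\bfS_h}\frac{c(\bfmu_h,-\bfz_h)}{\|\bfz_h\|_{1,\B}}
=
\sup_{\bfz_h\in\bfS_h}\frac{c(\bfmu_h,\bfz_h)}{\|\bfz_h\|_{1,\B}},
\]
where the last equality follows by replacing $\bfz_h$ with $-\bfz_h$, which leaves $\bfS_h$ and the norm unchanged. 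By Assumption~\ref{as:ass} the right-hand side is bounded below by $\zeta\|\bfmu_h\|_{\bfLambda}$, and since $\bfLambda_h\subset\bfLambda$ carries the norm of $\bfLambda$ we have $\|\bfmu_h\|_{\bfLambda_h}=\|\bfmu_h\|_{\bfLambda}$. This yields the assertion with $\beta_1=\zeta$.

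There is essentially no obstacle here: the content of the proposition is already contained in Assumption~\ref{as:ass}, and enlarging the test space from $\bfS_h$ to $\bfV_{0,h}\times\bfS_h$ can only increase the supremum. The one point worth recording is that the argument is insensitive to which of the two realizations $\hat c_1$ or $\hat c_2$ is used, since in both cases $c$ is a well-defined continuous bilinear form on $\bfLambda_h\times\bfS_h$ and Assumption~\ref{as:ass} is stated uniformly in that choice; hence the two discretizations are covered simultaneously.

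Together with Proposition~\ref{pr:elker} and the standard argument recalled for the continuous operator $\bbA$ (coercivity on the kernel plus the above inf-sup condition on $\bfV_{0,h}\times\bfS_h$), this proposition supplies the missing ingredient to establish the inf-sup condition~\eqref{eq:elker} for $\calA$ on $\bbK_h$, which, combined with~\eqref{eq:infsup}, gives the well-posedness of~\eqref{eq:misto}.
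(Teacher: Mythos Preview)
Your proof is correct and follows exactly the same approach as the paper: restrict the supremum to the slice $\{\mathbf{0}\}\times\bfS_h$ and invoke Assumption~\ref{as:ass}. The paper's proof is even more terse, writing the chain of inequalities in a single line without the explicit sign-flip remark, but the content is identical.
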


\begin{proof}
Using Assumption~\ref{as:ass} we have
\[
\zeta\|\bfmu_h\|_{\bfLambda}
\le\sup_{\bfz_h\in\bfS_h}\frac{c(\bfmu_h,\bfz_h)}{\|\bfz_h\|_{1,\B}}
\le\sup_{(\bfv_h,\bfz_h)\in\bfV_{0,h}\times\bfS_h}
\frac{c(\bfmu_h,\bfv_h(\Xb)-\bfz_h)}
{(\|\bfv_h\|^2_1+\|\bfz_h\|^2_{1,\B})^{1/2}}.
\]
\end{proof}

We now present some possible choices of $\bfS_h$ and $\bfLambda_h$ for which
Assumption~\ref{as:ass} holds true.

We start by considering the case when $\bfLambda=\bfLambda_2$ and the bilinear
form $\hat c=c_2$, namely it corresponds to the scalar product in $\Hub$.
The most natural situation is when $\bfLambda_h\subseteq\bfS_h$ which is the
object of the following proposition. This condition is satisfied, for
instance, if the mesh $\T_{\bfS}$ is the same as $\T_{\bfLambda}$ or a
refinement of it and the space $\bfS_h$ contains polynomials of degree higher
than or equal to those in $\bfLambda_h$.

\begin{proposition}

Let $\bfLambda=\bfLambda_2$ and the bilinear form $\hat c=c_2$ be the scalar
product in $\Hub$. If $\bfLambda_h\subseteq\bfS_h$ then the inf-sup
condition~\eqref{eq:ass} is satisfied.
\label{pr:facile}
\end{proposition}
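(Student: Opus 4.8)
The plan is to verify Assumption~\ref{as:ass} directly, exploiting the fact that in the case $\bfLambda=\bfLambda_2$ the norm on $\bfLambda$ is nothing but the $\Hub$ norm $\|\cdot\|_{1,\B}$, and $c=c_2$ is precisely the associated scalar product. Under this identification, the supremum appearing in~\eqref{eq:ass} is bounded below as soon as we are allowed to test against $\bfmu_h$ itself, which is exactly what the hypothesis $\bfLambda_h\subseteq\bfS_h$ guarantees.

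Concretely, given $\bfmu_h\in\bfLambda_h$, I would take $\bfz_h=\bfmu_h\in\bfS_h$ as a competitor in the supremum. Then
\[
c_2(\bfmu_h,\bfz_h)=(\bfmu_h,\bfmu_h)_\B+(\Grads\bfmu_h,\Grads\bfmu_h)_\B=\|\bfmu_h\|_{1,\B}^2,
\qquad \|\bfz_h\|_{1,\B}=\|\bfmu_h\|_{1,\B},
\]
so that
\[
\sup_{\bfz_h\in\bfS_h}\frac{c_2(\bfmu_h,\bfz_h)}{\|\bfz_h\|_{1,\B}}\ge
\frac{\|\bfmu_h\|_{1,\B}^2}{\|\bfmu_h\|_{1,\B}}=\|\bfmu_h\|_{1,\B}=\|\bfmu_h\|_{\bfLambda}.
\]
Hence~\eqref{eq:ass} holds with $\zeta=1$, and by Proposition~\ref{pr:infsupA2} the corresponding uniform inf-sup condition follows with $\beta_1=1$.

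There is essentially no obstacle in this particular case: the argument is just a self-testing estimate, using only the inclusion $\bfLambda_h\subseteq\bfS_h$ together with the coincidence of $\|\cdot\|_{\bfLambda}$ with $\|\cdot\|_{1,\B}$. The genuinely delicate situations are the ones in which this inclusion fails, or in which $c=c_1$ is merely the $\bfL^2(\B)$ pairing rather than the $\Hub$ inner product; those will require a more careful compatibility analysis between the meshes $\T_{\bfS}$ and $\T_{\bfLambda}$, and I would expect them to be handled by separate propositions.
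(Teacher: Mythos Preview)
Your proof is correct and follows exactly the same approach as the paper: choose $\bfz_h=\bfmu_h$ (possible since $\bfLambda_h\subseteq\bfS_h$), so that $c_2(\bfmu_h,\bfz_h)=\|\bfmu_h\|_{1,\B}^2$ and the inf-sup condition~\eqref{eq:ass} holds with $\zeta=1$.
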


\begin{proof}

Given $\bfmu_h\in\bfLambda_h$, since $\bfLambda_h\subseteq\bfS_h$, it is
possible to take $\bfz_h=\bfmu_h$ so that
\[
\|\bfmu_h\|_{\bfLambda}=
\frac{(\bfmu_h,\bfz_h)_{\B}+(\Grads\bfmu_h,\Grads\bfz_h)_{\B}}
{\|\bfz_h\|_{1,\B}}=\frac{c_2(\bfmu_h,\bfz_h)}{\|\bfz_h\|_{1,\B}}
\le\sup_{\bfz_h\in\bfS_h}\frac{c_2(\bfmu_h,\bfz_h)}{\|\bfz_h\|_{1,\B}}.
\]
Hence the inf-sup condition~\eqref{eq:ass} holds true with $\zeta=1$.

\end{proof}

Let us now consider the case when $\bfLambda=\bfLambda_1$ is the dual of
$\Hub$ and the bilinear form $\hat c=\hat c_1$ is the scalar product in
$\bfL^2(\B)$.
We take again the most natural situation when $\bfLambda_h\subseteq\bfS_h$ as
in Proposition~\ref{pr:facile}. In this case, however, the validity of the
inf-sup condition~\eqref{eq:ass} relies on an additional hypothesis that
involves the $\Hub$-stability of the $\bfL^2(\B)$-projection onto $\bfS_h$.

\begin{proposition}

Let $\bfLambda=\bfLambda_1=(\Hub)'$ and the bilinear form $\hat c=\hat c_1$ be
the scalar product in $\bfL^2(\B)$.
Let $P_0$ denote the $\bfL^2(\B)$-projection from $\Hub$ onto $\bfS_h$ and
assume that there is a constant $C$ such that
\begin{equation}
\|P_0\bfz\|_{1,\B}\le C_0\|\bfz||_{1,\B}\quad\forall\bfz\in\Hub.
\label{eq:proj}
\end{equation}
Then, if $\bfLambda_h\subseteq\bfS_h$, the inf-sup condition~\eqref{eq:ass} is
satisfied.
\label{pr:proj}
\end{proposition}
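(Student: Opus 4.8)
The plan is to reduce the discrete inf-sup condition~\eqref{eq:ass} to the assumed $\Hub$-stability~\eqref{eq:proj} of the $\bfL^2(\B)$-projection $P_0$. Fix $\bfmu_h\in\bfLambda_h$, $\bfmu_h\ne0$. Since here $c=\hat c_1$ is the $\bfL^2(\B)$ inner product and $\bfLambda_h\subseteq\bfL^2(\B)$, the $\bfLambda_1=(\Hub)'$ norm appearing on the right-hand side of~\eqref{eq:ass} is
\[
\|\bfmu_h\|_{\bfLambda}=\sup_{\bfz\in\Hub}\frac{(\bfmu_h,\bfz)_{\B}}{\|\bfz\|_{1,\B}},
\]
so it suffices to compare this supremum over all of $\Hub$ with the supremum of $(\bfmu_h,\bfz_h)_{\B}/\|\bfz_h\|_{1,\B}$ over the smaller space $\bfS_h$.

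The comparison is carried out by transferring the test function through $P_0$. Take $\bfz\in\Hub$ with $(\bfmu_h,\bfz)_{\B}>0$ — the only case relevant for the supremum, since $\|\bfmu_h\|_{\bfLambda}>0$. Because $\bfLambda_h\subseteq\bfS_h$ we have $\bfmu_h\in\bfS_h$, and $\bfz-P_0\bfz$ is $\bfL^2(\B)$-orthogonal to $\bfS_h$ by the definition of $P_0$; hence $(\bfmu_h,\bfz)_{\B}=(\bfmu_h,P_0\bfz)_{\B}>0$, which in particular forces $\|P_0\bfz\|_{1,\B}>0$. Since $P_0\bfz\in\bfS_h$, using~\eqref{eq:proj},
\[
\frac{(\bfmu_h,\bfz)_{\B}}{\|\bfz\|_{1,\B}}
=\frac{(\bfmu_h,P_0\bfz)_{\B}}{\|P_0\bfz\|_{1,\B}}\cdot\frac{\|P_0\bfz\|_{1,\B}}{\|\bfz\|_{1,\B}}
\le C_0\,\frac{(\bfmu_h,P_0\bfz)_{\B}}{\|P_0\bfz\|_{1,\B}}
\le C_0\sup_{\bfz_h\in\bfS_h}\frac{(\bfmu_h,\bfz_h)_{\B}}{\|\bfz_h\|_{1,\B}}.
\]
Taking the supremum over $\bfz\in\Hub$ gives~\eqref{eq:ass} with $\zeta=1/C_0$.

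There is essentially no obstacle beyond the hypothesis itself: the whole argument is ``move the test function from $\Hub$ into $\bfS_h$ via $P_0$ without changing the pairing, paying only the stability constant $C_0$''. The only points requiring care are the bookkeeping of signs and non-degeneracy (restricting to $\bfz$ with $(\bfmu_h,\bfz)_{\B}>0$ so that dividing by $\|P_0\bfz\|_{1,\B}$ is legitimate and $C_0$ lands on the correct side of the inequality) and the use of $\bfLambda_h\subseteq\bfS_h$ to kill the term $(\bfmu_h,\bfz-P_0\bfz)_{\B}$. It is also worth remarking that~\eqref{eq:proj} is exactly the ingredient that distinguishes this case from Proposition~\ref{pr:facile}: with $c=\hat c_1$ one can no longer simply take $\bfz_h=\bfmu_h$, because the left-hand pairing is only of $\bfL^2(\B)$ type while $\|\bfmu_h\|_{\bfLambda}$ is measured in the stronger dual norm $(\Hub)'$.
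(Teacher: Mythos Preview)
Your proof is correct and follows essentially the same approach as the paper: both transfer the test function from $\Hub$ to $\bfS_h$ via $P_0$, use $\bfLambda_h\subseteq\bfS_h$ to keep the pairing unchanged, and then invoke the $\Hub$-stability~\eqref{eq:proj} to control the denominator, yielding $\zeta=1/C_0$. The only cosmetic difference is that the paper selects a single maximizer $\tilde\bfz$ for the dual norm, whereas you bound the ratio for every admissible $\bfz$ and then take the supremum; your extra care about positivity and $P_0\bfz\ne0$ is a nice touch that the paper leaves implicit.
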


\begin{proof}

By definition of the norm in $\bfLambda$ there exists $\tilde\bfz\in\Hub$ such
that
\[
\|\bfmu_h\|_{\bfLambda}=\frac{\hat c_1(\bfmu_h,\tilde\bfz)}{\|\tilde\bfz\|_{1,\B}}
=\frac{\hat c_1(\bfmu_h,P_0\tilde\bfz)}{\|\tilde\bfz\|_{1,\B}},
\]
where in the last equality we used $\bfLambda_h\subseteq\bfS_h$. Finally,
using the $\Hub$-stability of $P_0$ stated in~\eqref{eq:proj}, we get
\[
\|\bfmu_h\|_{\bfLambda}\le C_0
\frac{\hat c_1(\bfmu_h,P_0\tilde\bfz)}{\|P_0\tilde\bfz\|_{1,\B}}\le C_0
\sup_{\bfz_h\in\bfS_h}\frac{\hat c_1(\bfmu_h,\bfz_h)}{\|\bfz_h\|_{1,\B}}.
\]
Hence, the proposition is proved with $\zeta=1/C_0$.

\end{proof}

The cases considered in Propositions~\ref{pr:facile} and~\ref{pr:proj}
generalize the situation discussed in~\cite{BG}, where $\bfLambda_h$ was
chosen equal to $\bfS_h$. It will be the object of further investigation to
explore other possible combinations for $\bfLambda_h$ and $\bfS_h$. In
particular, it would be quite natural to take a space of discontinuous finite
elements for the multiplier in the case when $\bfLambda=(\Hub)'$. On the other
hand, our present analysis does not cover for instance the situation when
$\bfLambda_h$ is the space of piecewise constants and $\bfS_h$ is the space of
continuous piecewise linear elements in each component:
Assumption~\ref{as:ass} requires $\dim(\bfS_h)\ge\dim(\bfLambda_h)$ as a
necessary condition, which is not satisfied on general meshes for this choice
of finite elements.

The results of this section can be summarized in the following stability and
convergence theorems.

\begin{thm}

Under the assumptions of Propositions~\ref{pr:elker} and \ref{pr:infsupA2},
there exists $\gamma_1>0$ such that the inf-sup condition~\eqref{eq:elker} is
satisfied.

If, moreover, $\bfV_h$ and $Q_h$ satisfy the usual compatibility condition for
the solution of the Stokes problem, then the inf-sup
condition~\eqref{eq:infsup} holds true.

\end{thm}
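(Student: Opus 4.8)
The plan is to verify the two abstract inf-sup conditions \eqref{eq:elker} and \eqref{eq:infsup} that guarantee well-posedness of the discrete mixed problem \eqref{eq:misto}, following exactly the strategy already laid out in the excerpt for the continuous operator $\bbA$. The condition \eqref{eq:infsup} is the easy one: as noted right after its statement, $\calB(\bfV_h,q_h)=(\div\bfv_h,q_h)$ depends only on the $\bfV_h$ component of $\bfV_h$, so the supremum over $\bbV_h$ reduces to the supremum over $\bfv_h\in\bfV_h$, and the hypothesis that $(\bfV_h,Q_h)$ is a Stokes-stable pair gives \eqref{eq:infsup} with $\gamma_2$ equal to the Stokes inf-sup constant. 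This part needs no further work.

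The substantive part is \eqref{eq:elker}, the inf-sup condition for $\calA$ on the discrete kernel $\bbK_h\times\bbK_h$ (with the multiplier component recovered separately). Here I would follow the standard saddle-point-within-a-kernel argument of \cite{bbf}: since $\calA$ itself has a saddle point structure (the $c(\cdot,\cdot)$ terms couple $(\bfu,\bfX)$ to $\bflambda$ with opposite signs), its invertibility on $\ker\bbB$ is equivalent to (i) coercivity of the "elasticity-type" block $a_f+a_s$ on the kernel $\bbK_h$ of the coupling form, and (ii) a uniform inf-sup condition for the coupling form $c$ between $\bfV_{0,h}\times\bfS_h$ and $\bfLambda_h$. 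Property (i) is exactly Proposition~\ref{pr:elker}, and property (ii) is exactly Proposition~\ref{pr:infsupA2} (valid under Assumption~\ref{as:ass}). Combining these two via the abstract Theorem on saddle point problems in \cite{bbf} — checking along the way that the graph norm on $\bbV$ is equivalent, on the relevant subspaces, to the product norm $(\|\bfv\|_1^2+\|\bfz\|_{1,\B}^2+\|\bfmu\|_{\bfLambda}^2)^{1/2}$ used in those propositions — yields a constant $\gamma_1>0$, independent of the mesh sizes, such that \eqref{eq:elker} holds. I would then assemble the two inf-sup conditions into the statement, noting that the constants depend on the model/time-step parameters through $\alpha_1$ but not on $\hx,\hs,\hl$.

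The main obstacle, and the reason Propositions~\ref{pr:elker}--\ref{pr:infsupA2} were isolated beforehand, is controlling the full $\bfH^1(\B)$-norm of $\bfX_h$: the bilinear form $a_s$ with $\beta=0$ only supplies $\|\Grads\bfX_h\|_{0,\B}$, so coercivity on $\bbK_h$ hinges on recovering $\|\bfX_h\|_{0,\B}$ from the kernel constraint. This is handled in the proof of Proposition~\ref{pr:elker} by splitting off the mean value $\Xoh$, testing the constraint $c(\bfmu_h,\bfv_h(\Xb)-\bfX_h)=0$ against the constant $\bfmu_h=\Xoh$ (using that $\bfLambda_h$ contains global constants and that on constants $c$ reduces to the $\bfL^2(\B)$ inner product in either choice of $c$), and bounding the oscillation by Poincaré. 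Once that lemma and the compatibility Assumption~\ref{as:ass} are in hand, the remainder is the routine abstract bookkeeping described above, so I would keep the proof short and refer to \cite{bbf} and to Propositions~\ref{pr:elker}--\ref{pr:infsupA2} for the two ingredients.
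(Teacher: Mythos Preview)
Your proposal is correct and follows essentially the same route as the paper: both argue that \eqref{eq:elker} is the abstract invertibility condition for the inner saddle point operator $\underline{\bbA}$ restricted to $\ker\underline{\bbB}$, obtain it by combining the coercivity on $\bbK_h$ from Proposition~\ref{pr:elker} with the surjectivity/inf-sup of the coupling block from Proposition~\ref{pr:infsupA2} via the standard theory in~\cite{bbf}, and note that \eqref{eq:infsup} is immediate from the Stokes stability of $(\bfV_h,Q_h)$. Your additional paragraph recapping how the mean value of $\bfX_h$ is controlled really belongs to the proof of Proposition~\ref{pr:elker} rather than to this theorem, but it does no harm.
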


\begin{proof}

The results of this theorem follow from the previous propositions with
classical arguments related to the stability of saddle point
problems~\cite{bbf} (see also~\cite{XZ}).

The inf-sup condition~\eqref{eq:elker} is the necessary and sufficient
condition for the uniform invertibility of the matrix
\[
\underline{\bbA}=\left[
\begin{array}{cc;{2pt/2pt}c}
\underline{\m{A}}_f&\m{0}&\underline{\m{C}}_f^\top\\
\m{0}&\underline{\m{A}}_s&-\underline{\m{C}}_s^\top\\
\hdashline[2pt/2pt]
\underline{\m{C}}_f&-\underline{\m{C}}_s&\m{0}
\end{array}
\right]
\]
restricted to the discrete kernel of the matrix
\[
\underline{\bbB}=
\left[
\begin{array}{cc;{2pt/2pt}c}
\underline{\m{B}}_f&\m{0}&\m{0}
\end{array}
\right],
\]
where the blocks $\underline{\m{A}}_f$, $\underline{\m{A}}_s$,
$\underline{\m{B}}_f$, $\underline{\m{C}}_f$, and $\underline{\m{C}}_s$ are
matrix representations of the corresponding operators in~\eqref{eq:underline}.

Proposition~\ref{pr:elker} states the uniform invertibility of the block
\[
\left[
\begin{array}{cc}
\underline{\m{A}}_f&\m{0}\\
\m{0}&\underline{\m{A}}_s
\end{array}
\right]
\]
restricted to the kernel $\bbK_h$ of
\[
\left[
\begin{array}{cc}
\underline{\m{C}}_f&-\underline{\m{C}}_s
\end{array}
\right].
\]

Proposition~\ref{pr:infsupA2} states the surjectivity of this last matrix with
uniform bound of its inverse.

Putting things together, we get the inf-sup condition~\eqref{eq:elker}.
The second part of the theorem has been discussed after
formula~\eqref{eq:infsup}.

\end{proof}

From the stability of the discrete problems, the convergence result follows in
a straightforward way.

\begin{thm}

Let $\bfV_h$ and $Q_h$ satisfy the usual compatibility condition for the
solution of the Stokes problem and let us assume the hypotheses of
Propositions~\ref{pr:elker} and \ref{pr:infsupA2}. Then there exists a unique
solution $(\bfu_h,p_h,\bfX_h,\bflambda_h)$ to Problem~\ref{pb:saddleh}. Let
$(\bfu,p,\bfX,\bflambda)$ be the solution to the continuous
Problem~\ref{pb:saddle}. Then the following optimal error estimate holds true
\[
\aligned
&\|\bfu-\bfu_h\|_1+\|p-p_h\|_0+\|\bfX-\bfX_h\|_{1,\B}
+\|\bflambda-\bflambda_h\|_{\bfLambda}\\
&\qquad\le C\left( \inf_{\bfv\in\bfV_h}\|\bfu-\bfv\|_1+
\inf_{q\in Q_h}\|p-q\|_0+
\inf_{\bfz\in\bfS_h}\|\bfX-\bfz\|_{1,\B}+
\inf_{\bfmu\in\bfLambda_h}\|\bflambda-\bfmu\|_{\bfLambda}\right).
\endaligned
\]

\end{thm}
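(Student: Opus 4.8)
The plan is to derive this convergence theorem as a direct corollary of the abstract theory of mixed finite element methods applied to the saddle point problem~\eqref{eq:misto}. The preceding theorem establishes the two discrete inf-sup conditions~\eqref{eq:elker} and~\eqref{eq:infsup}; together with the boundedness of the bilinear forms $\calA$ and $\calB$ (which is immediate from the continuity of $a_f$, $a_s$, $c$, and of the trace-composition map $\bfv\mapsto\bfv(\Xb)$, using that $\Xb\in\bfW^{1,\infty}(\B)$), these are precisely the hypotheses required by the standard theory of perturbed saddle point problems~\cite{bbf}. First I would invoke that abstract result to obtain existence and uniqueness of $(\bfU_h,p_h)\in\bbV_h\times Q_h$ solving~\eqref{eq:misto}, which unpacks into existence and uniqueness of $(\bfu_h,p_h,\bfX_h,\bflambda_h)$ for Problem~\ref{pb:saddleh}.

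Next I would write down the quasi-optimal C\'ea-type estimate furnished by the same theory: the error $\|\bfU-\bfU_h\|_{\bbV}+\|p-p_h\|_0$ is bounded, up to a constant depending only on the boundedness constants of $\calA,\calB$ and on $\gamma_1,\gamma_2$, by the best-approximation error $\inf_{\bfV_h\in\bbV_h}\|\bfU-\bfV_h\|_{\bbV}+\inf_{q_h\in Q_h}\|p-q_h\|_0$. Here one uses that the continuous solution $(\bfU,p)$ of Problem~\ref{pb:saddle} satisfies the analogous variational system, so that Galerkin orthogonality holds up to the consistency of $\hat c$ with $c$; since in both cases~\eqref{eq:choice1}--\eqref{eq:choice2} the discrete form $\hat c$ is the restriction of $c$ to $\bfLambda_h\times\bfS_h$ (respectively the $\bfL^2$-form, which coincides with $c_1$ on $\bfLambda_h\subseteq\bfL^2(\B)$), the method is consistent and no variational crime term appears. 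Finally, splitting $\|\bfU-\bfV_h\|_{\bbV}^2=\|\bfu-\bfv_h\|_1^2+\|\bfX-\bfz_h\|_{1,\B}^2+\|\bflambda-\bfmu_h\|_{\bfLambda}^2$ and taking the infimum componentwise yields the stated bound, after absorbing the cross terms with the triangle inequality and renaming constants.

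The only genuine point requiring care—what I would flag as the main obstacle—is confirming that the abstract inf-sup machinery applies \emph{uniformly in $h$}, i.e.\ that the constant $C$ in the final estimate does not degenerate. This is exactly why Propositions~\ref{pr:elker} and~\ref{pr:infsupA2} were stated with constants independent of the mesh sizes, and why Assumption~\ref{as:ass} (verified in Propositions~\ref{pr:facile} and~\ref{pr:proj}) is imposed; the present proof merely has to observe that the constant produced by the abstract theorem is a fixed rational function of $\gamma_1$, $\gamma_2$, and the continuity constants, all of which are $h$-independent. A secondary subtlety is that the norm on $\bbV$ was defined as the graph norm of the operator $\bbA$; one should remark that on $\bbV_h$ this is equivalent, uniformly in $h$, to the product norm $\|\cdot\|_1\times\|\cdot\|_{1,\B}\times\|\cdot\|_{\bfLambda}$, which is what lets us pass from the abstract estimate to the componentwise best-approximation errors displayed in the statement. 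With these two observations the theorem follows by the classical argument, and I would keep the written proof to a couple of sentences citing~\cite{bbf} and the two preceding propositions.
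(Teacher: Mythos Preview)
Your proposal is correct and matches the paper's approach: the paper offers no explicit proof beyond remarking that ``from the stability of the discrete problems, the convergence result follows in a straightforward way,'' i.e.\ it defers entirely to the abstract saddle-point theory in~\cite{bbf} once the inf-sup conditions~\eqref{eq:elker}--\eqref{eq:infsup} have been secured in the preceding theorem. One minor clarification: the ``graph norm'' on $\bbV$ in the paper is simply the product norm $(\|\bfv\|_1^2+\|\bfz\|_{1,\B}^2+\|\bfmu\|_{\bfLambda}^2)^{1/2}$, not the graph norm of the operator $\bbA$, so your secondary subtlety about norm equivalence does not actually arise.
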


\section{Numerical results}
\label{se:numerical}

In this section we collect some numerical experiments that have been reported
in previous papers and that confirm the effectiveness of the method.

We start with a test reported in~\cite{BCG} confirming the unconditional
stability stated in Proposition~\ref{pr:stab}. We consider a benchmark test
problem where at the initial time the solid occupies an ellipsoidal region
which evolves approaching a circular equilibrium configuration. We approximate
the problem by using the enhanced Bercovier--Pironneau element introduced and
analyzed in~\cite{budini}, consisting in a P1-iso-P2 discretization of the
velocities and in a continous P1 discretization of the pressures augmented by
piecewise constant functions in order to improve the mass conservation of the
scheme.
We compare our fictitious domain approach FE-DLM (solid line) with the FE-IBM
scheme (dashed line), see~\cite{CFL}.
We take $\Omega$ equal to the square of side $(-1,1)$ and we study a
ring-shaped immersed structure with reference configuration given by
$\B=\{\bfx\in\RE^2:0.3\le|\bfx|\le0.5\}$. For symmetry reasons, we reduce the
computation to a quarter of $\Omega$ so that the configuration is the one
reported schematically in Figure~\ref{fg:mesh}.

\begin{figure}
        \centering
        \includegraphics[height=0.4\textwidth]{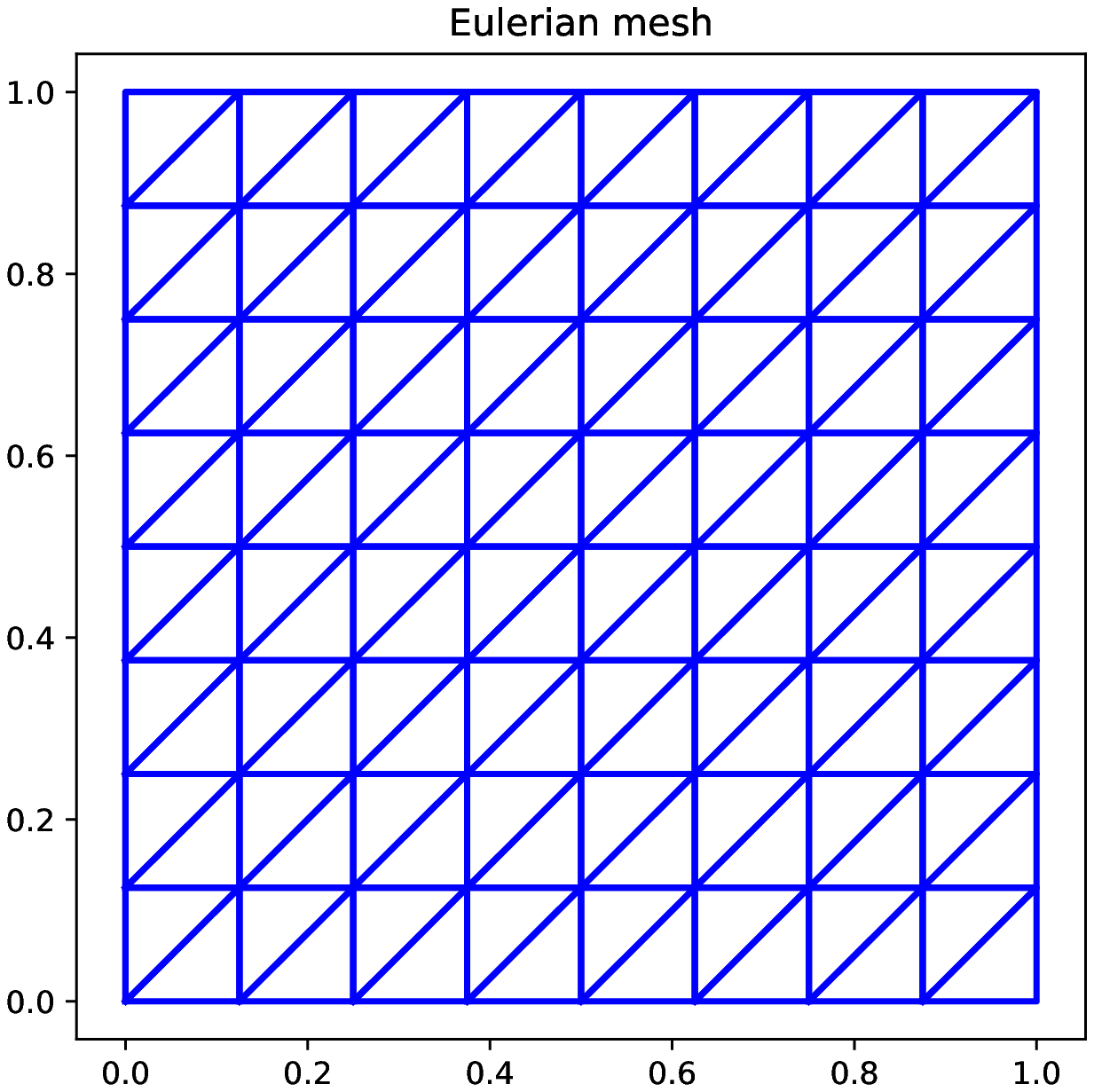}
        \includegraphics[height=0.4\textwidth]{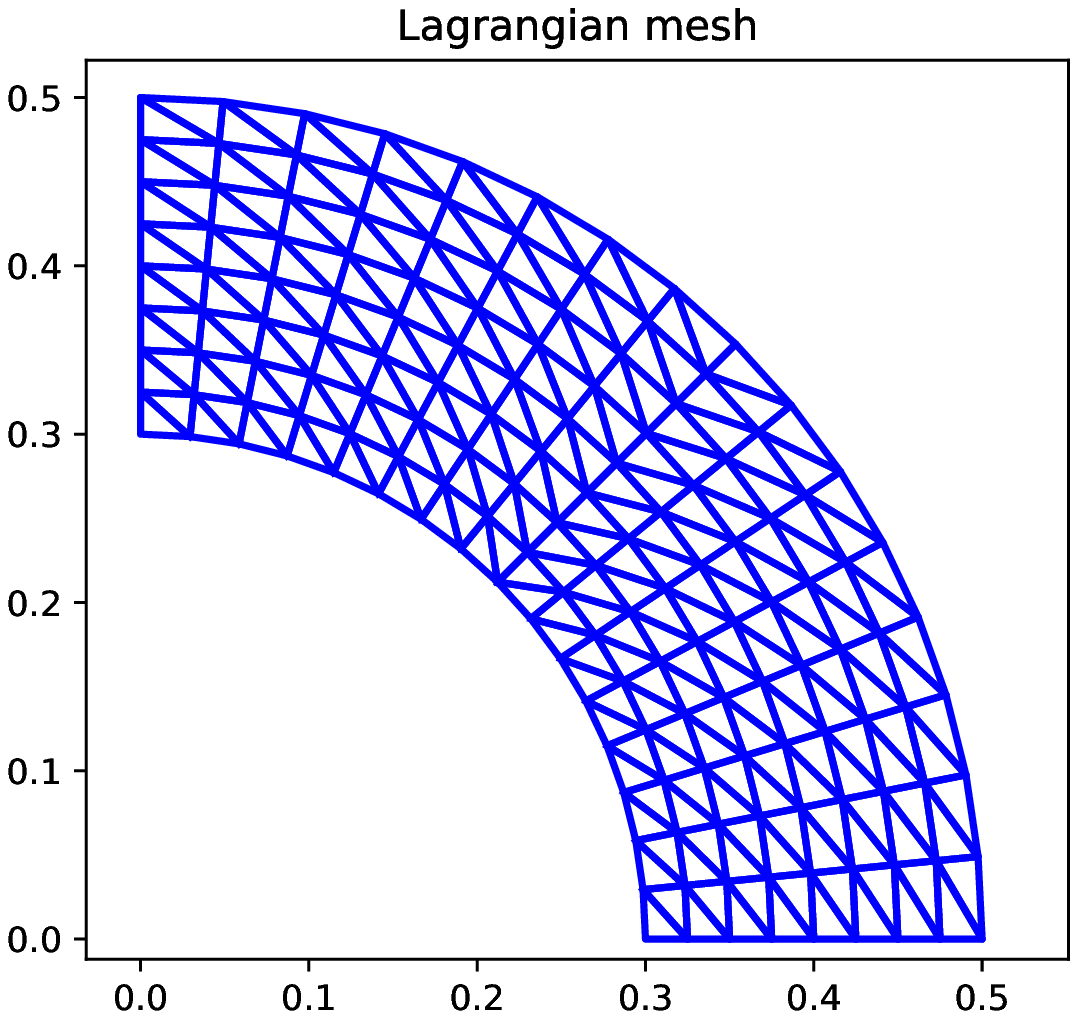}
        \caption{Sketch of the meshes used for the fluid and the structure}
        \label{fg:mesh}
\end{figure}

The materials properties are $\rho_f=1$, $\nu=0.05$, $\dr=0.3$, and
$\kappa=1$.
The solid mesh size is equal to $1/8$ and the figures show the behavior of the
following energy ratio as a function of the time step and of the fluid mesh
size
\begin{equation}
\label{eq:ener_comp}
\Pi(\bfX_h^{n},\bfu_h^{n})=\frac{\rho_f}{2}\|\bfu_h^{n}\|^2_0
+\frac{\dr}{2}\left\|\frac{\bfX_h^{n}-\bfX_h^{n-1}}{\dt}\right\|^2_{0,\B}
+
E(\bfX_h^{n}).
\end{equation}
\begin{figure}
 \centering
 \subcaptionbox{\tiny{$\Delta t = 10^{-1}$, $h_x = 1/4$}.}
   {\includegraphics[scale=.2]{./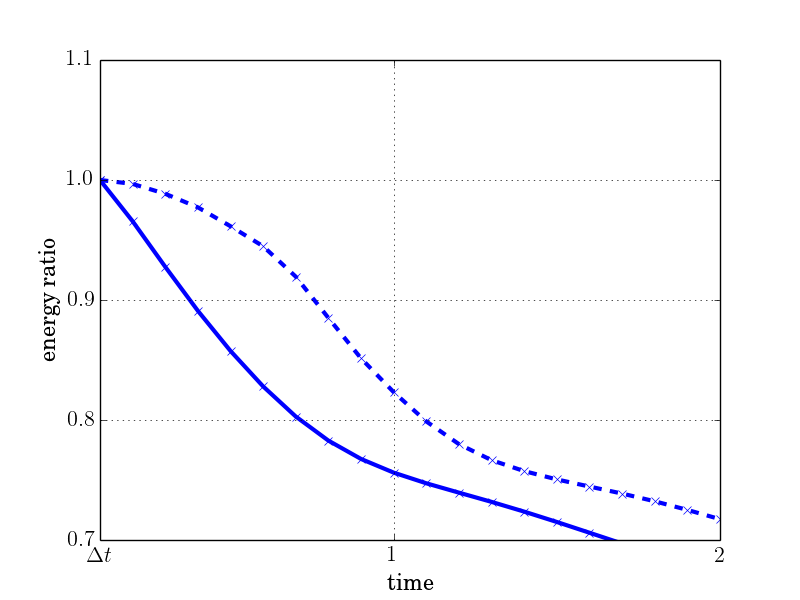}}
 \subcaptionbox{\tiny{$\Delta t = 10^{-1}$, $h_x = 1/8$}.}
   {\includegraphics[scale=.2]{./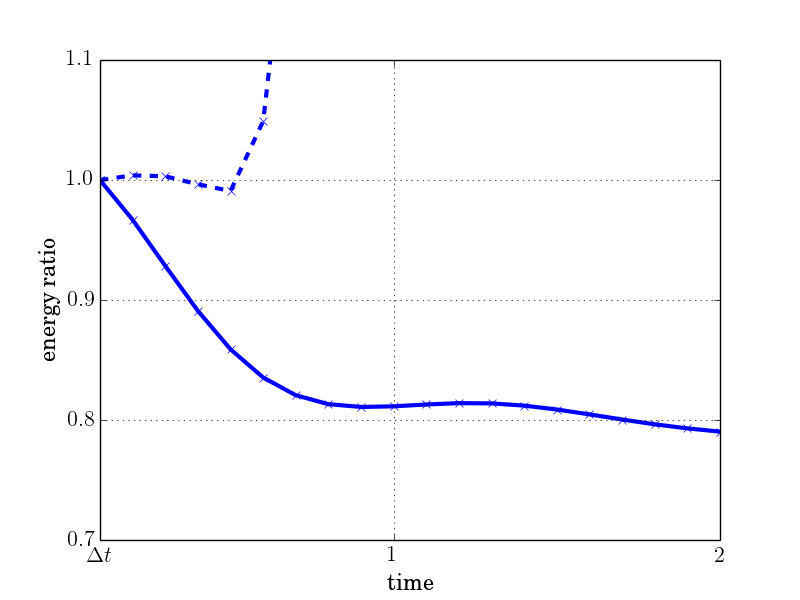}}
 \subcaptionbox{\tiny{$\Delta t = 10^{-1}$, $h_x = 1/16$}.}
   {\includegraphics[scale=.2]{./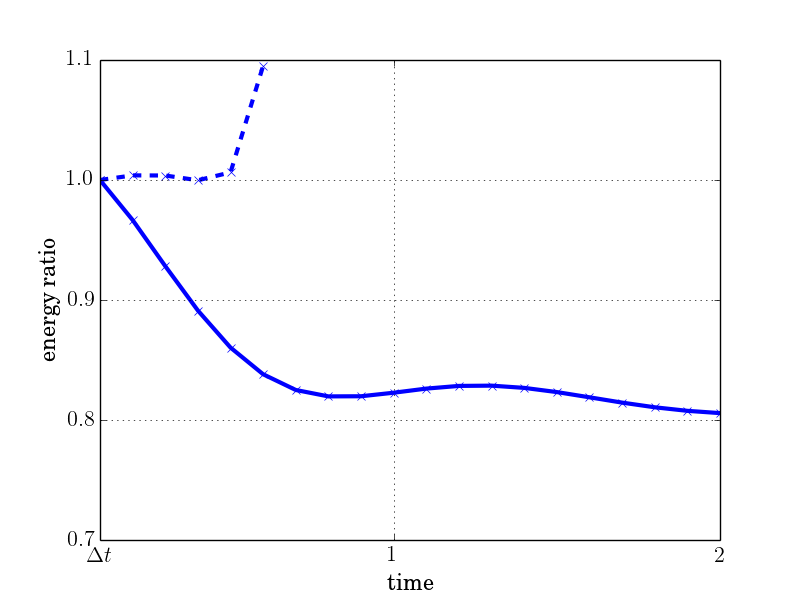}}\\
  \subcaptionbox{\tiny{$\Delta t = 5\cdot10^{-2}$, $h_x = 1/4$}.}
   {\includegraphics[scale=.2]{./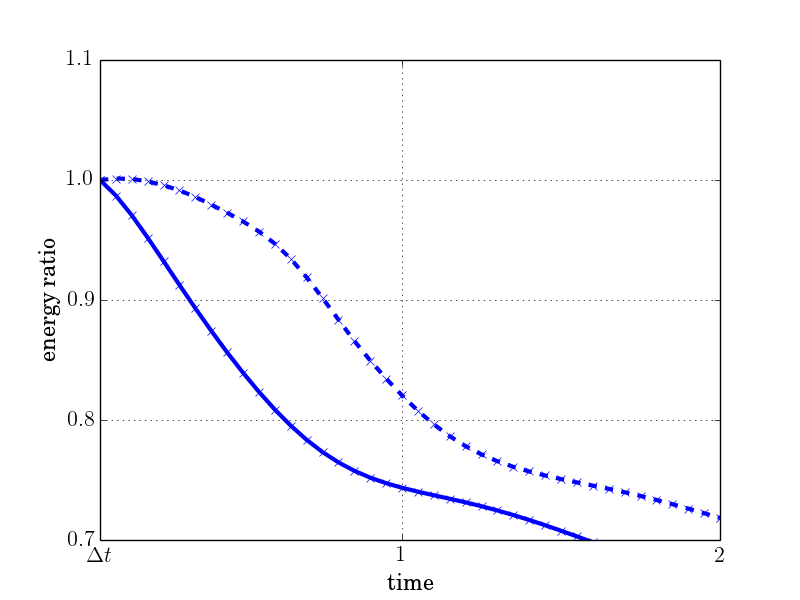}}
 \subcaptionbox{\tiny{$\Delta t = 5\cdot10^{-2}$, $h_x = 1/8$}.}
   {\includegraphics[scale=.2]{./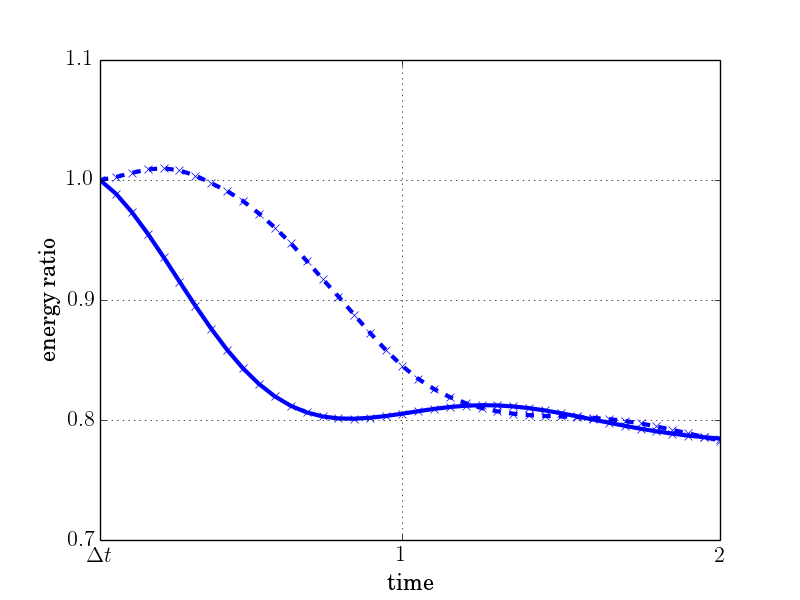}}
 \subcaptionbox{\tiny{$\Delta t = 5\cdot10^{-2}$, $h_x = 1/16$}.}
   {\includegraphics[scale=.2]{./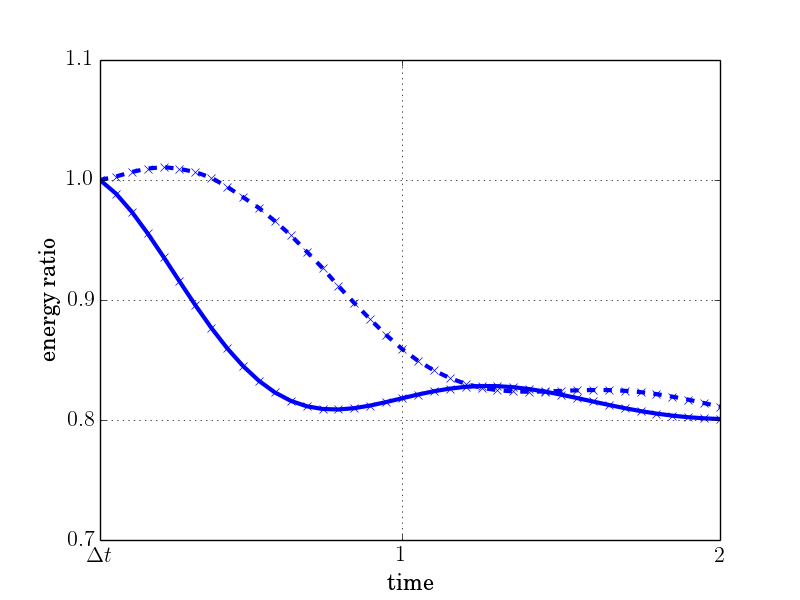}}
 \caption{Evolution of the quantity
$\Pi(\bfX_h^{n},\bfu_h^{n})/\Pi(\bfX_h^{0},\bfu_h^{0})$ (see
Equation~\eqref{eq:ener_comp}) for different $\dt$ when $h_x$ varies.
The solid line corresponds to the formulation FE-DLM described in this paper,
while the dashed line refers to the FE-IBM scheme which is only conditionally
stable}
\label{fig:thick_energy_rho03}
\end{figure}

In Table~\ref{TableConvNavierSemiimpl16} we report the results presented
in~\cite{wolf} about the convergence rates in time when different time schemes
are used.
In these computations the mesh of $\Omega$ is based on a subdivision of
$(-1,1)$ in $32$ equal subintervals and the structure is modeled by a
Lagrangian mesh obtained by halving the meshsize of the one reported in
Figure~\ref{fg:mesh}.
The fluid is initially at rest and the structure is stretched by a factor
$1.4$ in the vertical direction and shrunk by the same factor in the
horizontal direction.

The physical parameters are $\rho_f=\rho_s=1$, $\nu=0.1$, $\kappa=10$, and
$T=1$.

We consider \BD (semi-implicit backward Euler~\eqref{eq:semi-impl}), \BDF
(see~\eqref{eq:BDF}), and two variants of Crank--Nicolson scheme. We denote by
\CNM the case when the nonlinear terms are evaluated using the midpoint rule
and by \CNT the case when the trapezoidal rule is used.

The reference solution is calculated by using a smaller timestep with the
\BDF scheme.

\begin{table}
        \centering
        \begin{tabular}{| l | l r | l r | l r | l r |}
                \multicolumn{9}{c}{Fluid velocity} \\
                \hline
                & \multicolumn{2}{|c|}{\BD} & \multicolumn{2}{|c|}{\BDF} & \multicolumn{2}{|c|}{\CNM} & \multicolumn{2}{|c|}{\CNT}\\
                $\dt$ & $L^2$ error & rate & $L^2$ error & rate & $L^2$ error & rate & $L^2$ error & rate\\
                \hline  
                $0.05$   &$9.18\cdot 10^{-2}$&      &$3.89\cdot 10^{-2}$&      &$2.36\cdot 10^{-1}$&      &$2.39\cdot 10^{-1}$& \\
                $0.025$  &$5.05\cdot 10^{-2}$&$0.86$&$8.59\cdot 10^{-3}$&$2.18$&$7.54\cdot 10^{-2}$&$1.64$&$7.06\cdot 10^{-2}$&$1.76$\\
                $0.0125$ &$2.63\cdot 10^{-2}$&$0.94$&$3.32\cdot 10^{-3}$&$1.37$&$4.24\cdot 10^{-2}$&$0.83$&$2.22\cdot 10^{-2}$&$1.67$\\
                $0.00625$&$1.33\cdot 10^{-2}$&$0.98$&$1.40\cdot 10^{-3}$&$1.24$&$2.19\cdot 10^{-2}$&$0.96$&$4.19\cdot 10^{-3}$&$2.40$\\
                \hline
        \end{tabular}

        \begin{tabular}{| l | l r | l r | l r | l r |}
                \multicolumn{9}{c}{Structure deformation} \\
                \hline
                & \multicolumn{2}{|c|}{\BD} & \multicolumn{2}{|c|}{\BDF} & \multicolumn{2}{|c|}{\CNM} & \multicolumn{2}{|c|}{\CNT}\\
                $\dt$ & $L^2$ error & rate & $L^2$ error & rate & $L^2$ error & rate & $L^2$ error & rate\\
                \hline
                $0.05$   &$2.03\cdot 10^{-3}$&      &$7.86\cdot 10^{-4}$&      &$1.81\cdot 10^{-3}$&      &$6.51\cdot 10^{-4}$& \\
                $0.025$  &$1.06\cdot 10^{-3}$&$0.93$&$3.28\cdot 10^{-4}$&$1.26$&$9.75\cdot 10^{-4}$&$0.89$&$1.31\cdot 10^{-4}$&$2.31$\\
                $0.0125$ &$5.34\cdot 10^{-4}$&$1.00$&$1.44\cdot 10^{-4}$&$1.18$&$5.10\cdot 10^{-4}$&$0.93$&$4.82\cdot 10^{-5}$&$1.44$\\
                $0.00625$&$2.69\cdot 10^{-4}$&$0.99$&$6.31\cdot 10^{-5}$&$1.19$&$2.55\cdot 10^{-4}$&$1.00$&$1.29\cdot 10^{-5}$&$1.90$\\
                \hline
        \end{tabular}
        \caption{Convergence results for the semi-implicit scheme on the fine mesh}
        \label{TableConvNavierSemiimpl16}
\end{table}

We conclude this section by showing the evolution of the structure
corresponding to the last example, see Figure~\ref{fg:annulus}. A similar
example corresponding to a square structure was reported in~\cite{ecmi}, see
Figure~\ref{fg:rectangulus}

\begin{figure}
        \centering
        \includegraphics[width=0.4\textwidth]{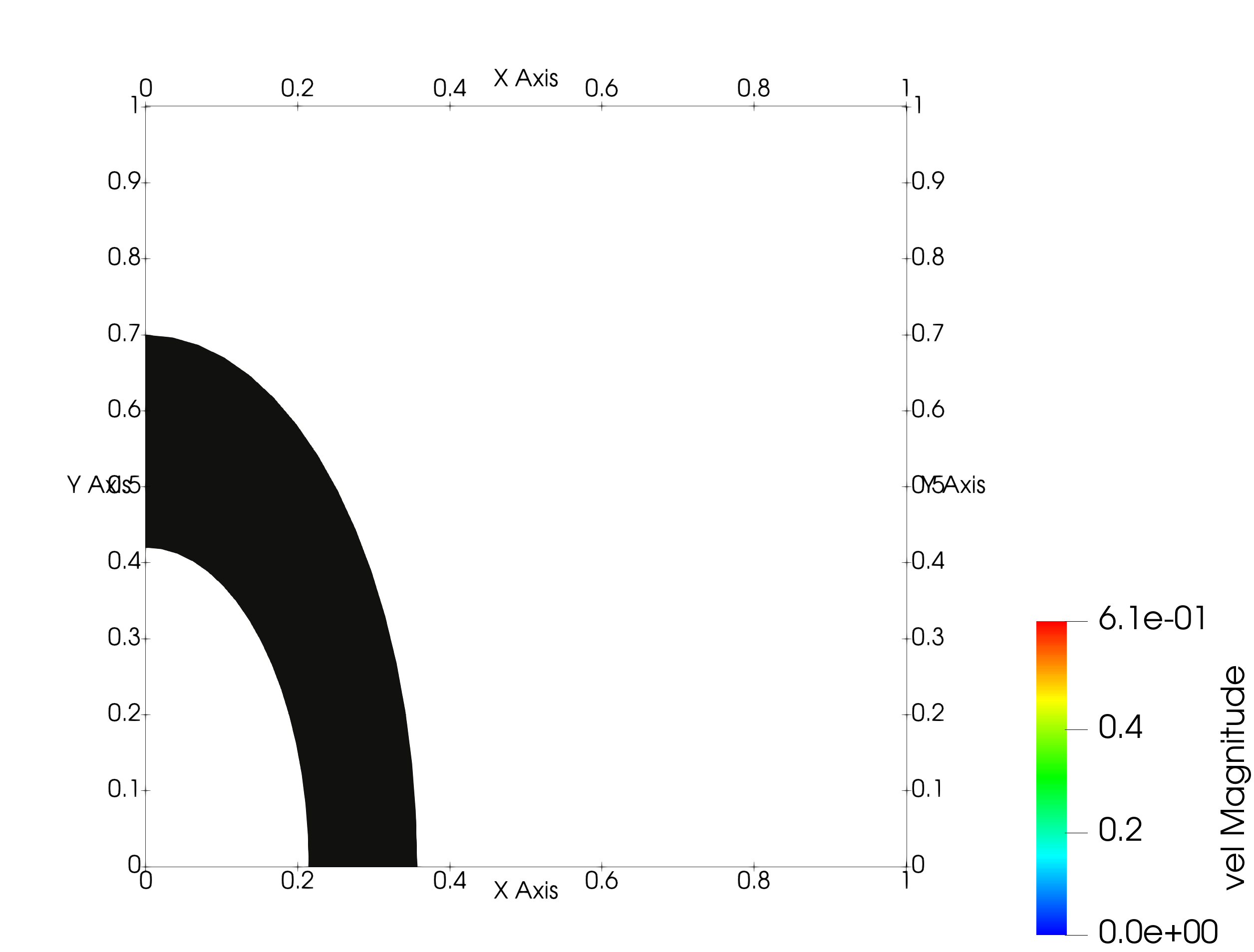}
        \includegraphics[width=0.4\textwidth]{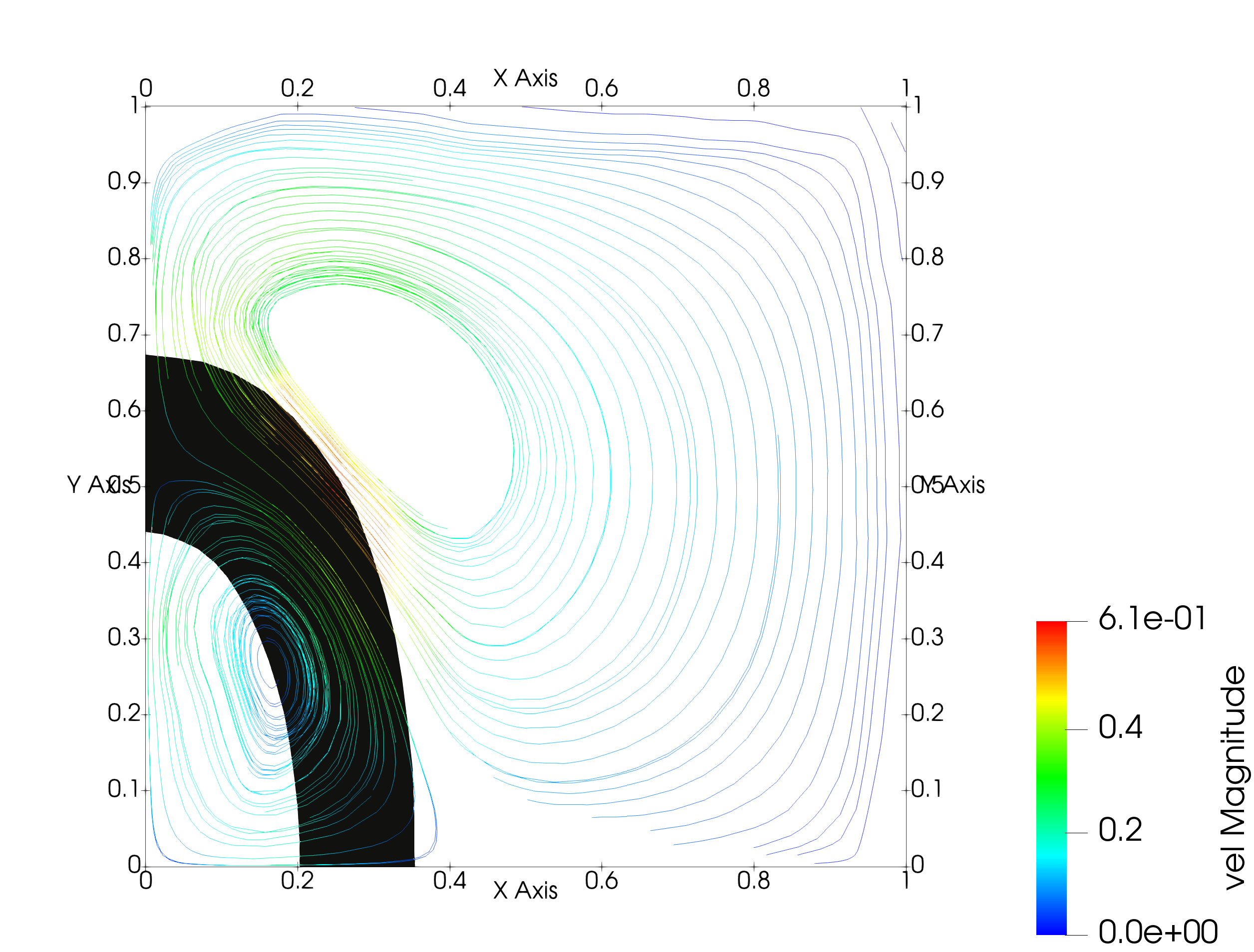}
        \includegraphics[width=0.4\textwidth]{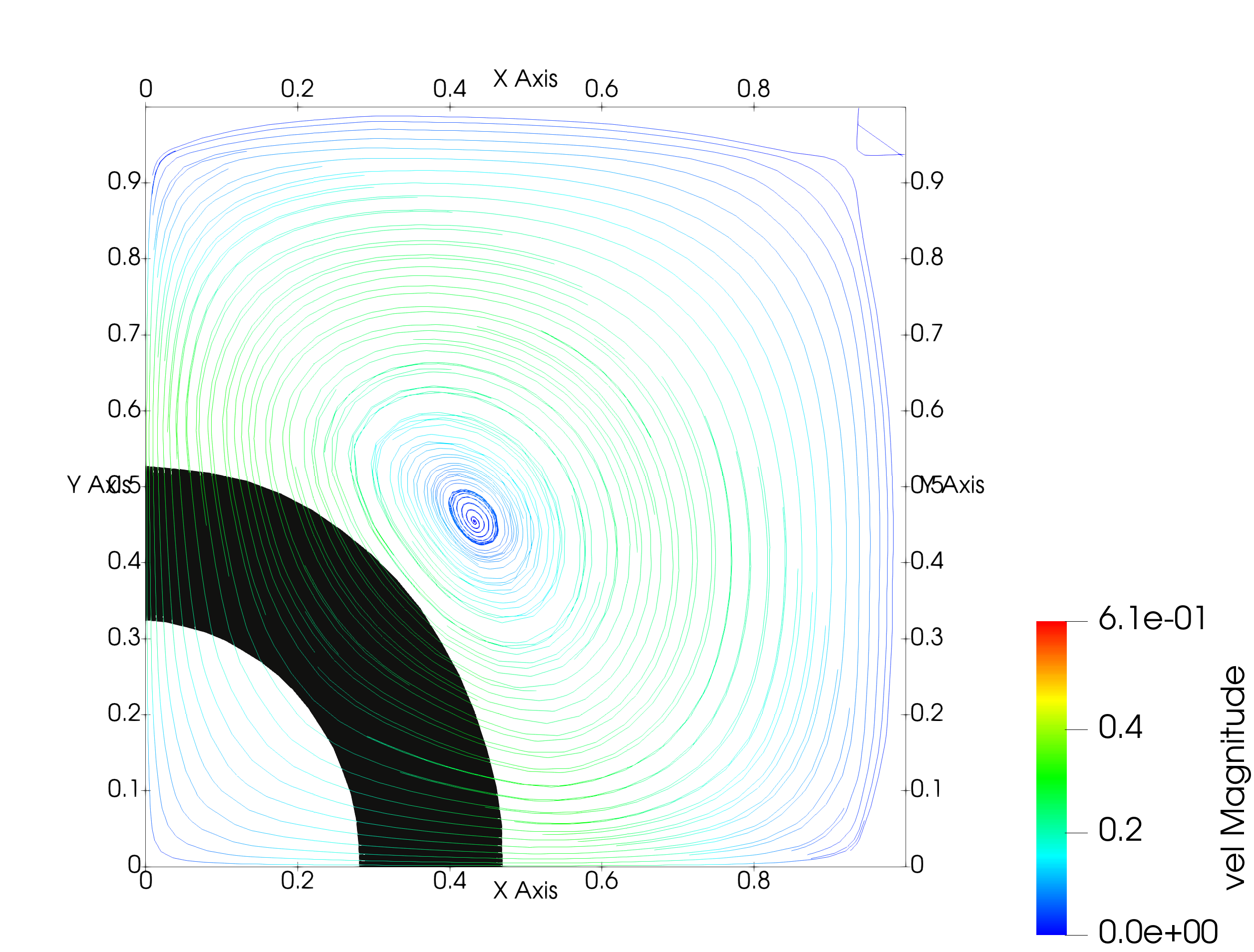}
        \includegraphics[width=0.4\textwidth]{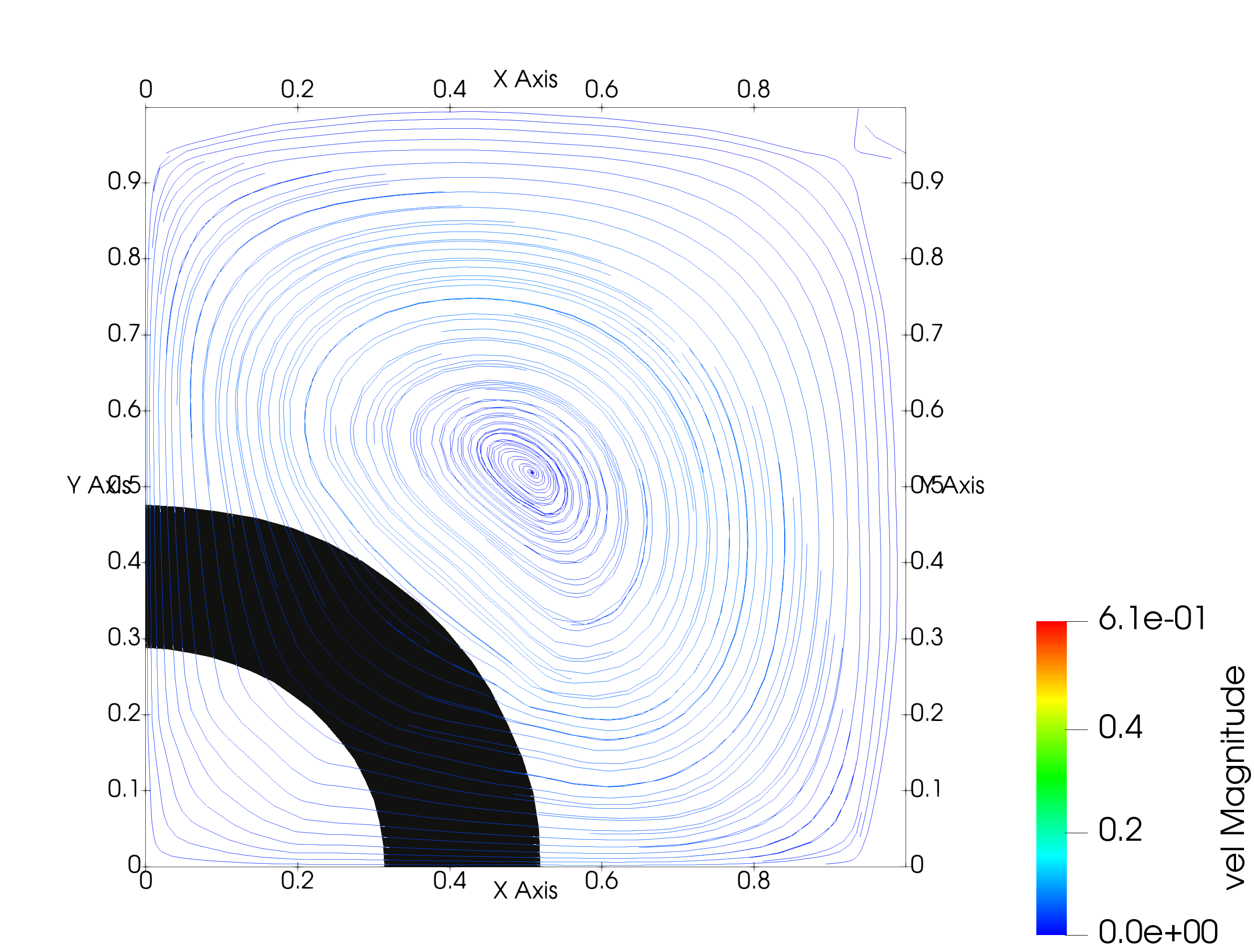}
        \caption{The evolution of an initially deformed ring-shaped structure
                 (computation performed on a quarter of a square for symmetry
                 reasons)}
        \label{fg:annulus}
\end{figure}

\begin{figure}
\centering
\includegraphics[scale = .28]{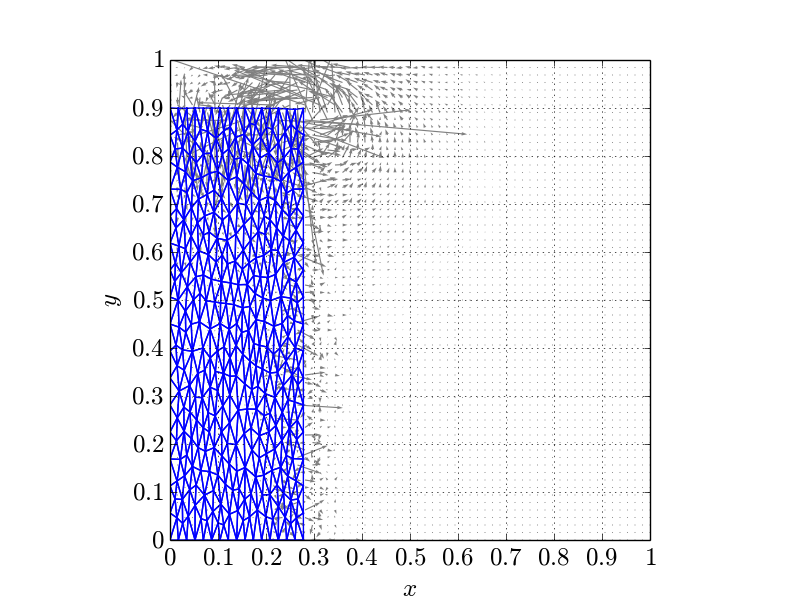}
\includegraphics[scale = .28]{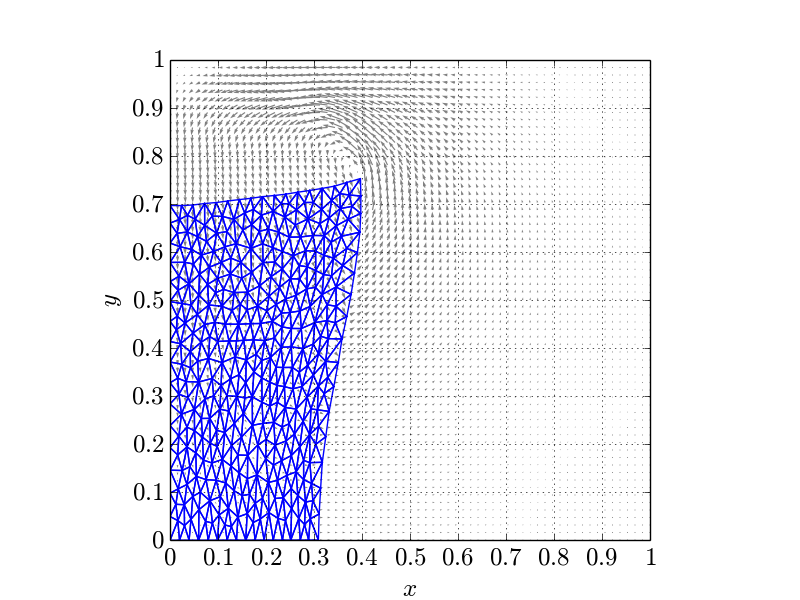}
\includegraphics[scale = .28]{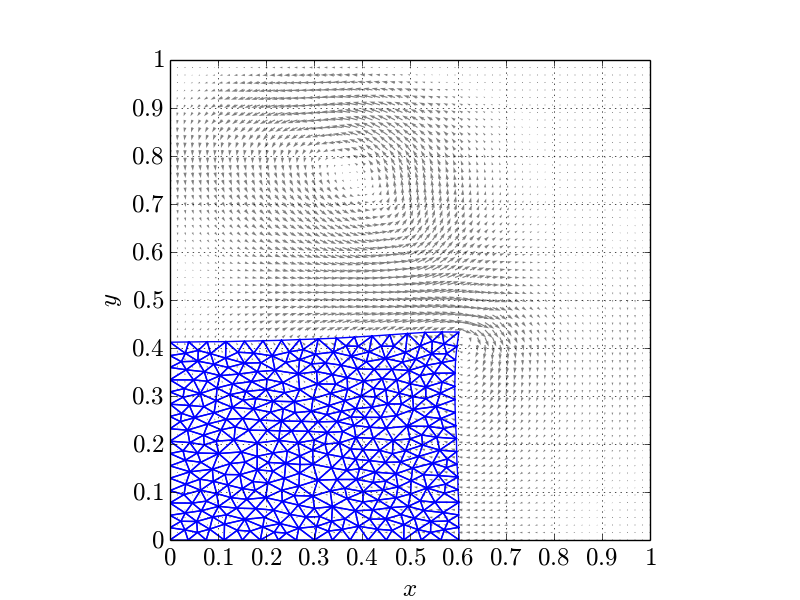}
\includegraphics[scale = .28]{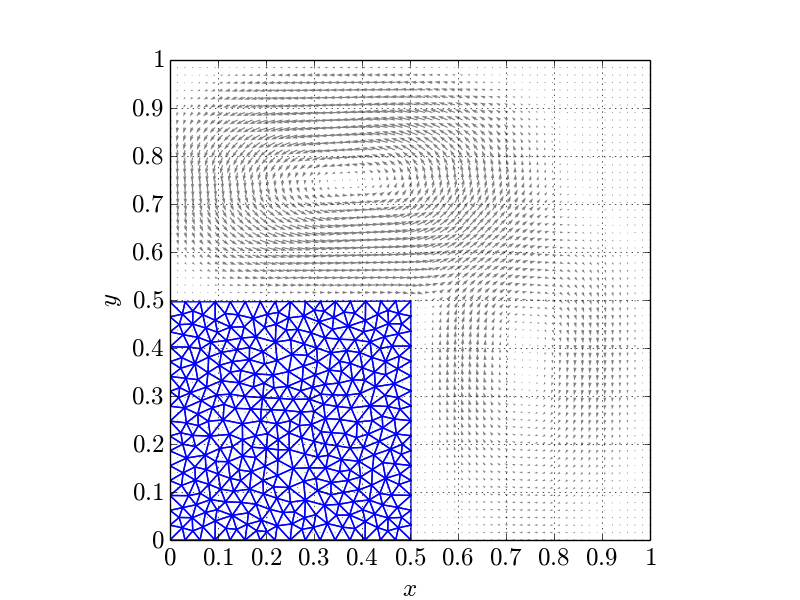}
\caption{Evolution of an initially deformed square structure immersed in a
fluid}
\label{fg:rectangulus}
\end{figure}

\bibliographystyle{plain}
\bibliography{ref}

\end{document}